\numberwithin{equation}{section}
\newtheoremstyle{fancy1}{10pt}{10pt}{\itshape}{12pt}{\textsc\bgroup}{.\egroup}{8pt}{
}
\newtheoremstyle{fancy2}{10pt}{10pt}{}{12pt}{\itshape}{.}{8pt}{ }
\theoremstyle{fancy1}
\newtheorem{prop}[equation]{Proposition}
\newtheorem{thm}[equation]{Theorem}
\newtheorem*{thm*}{Theorem}
\newtheorem{main}{Theorem}
\newtheorem*{main*}{Theorem}
\newtheorem*{cor*}{Corollary}
\newtheorem*{prop*}{Proposition}
\newtheorem*{remark*}{Remark}
\newtheorem*{problem*}{Problem}
\theoremstyle{fancy2}
\newtheorem*{rems*}{Remarks}
\newtheorem*{rem*}{Remark}
\newtheorem*{example*}{Example}
\newcommand{\cref}[1]{Corollary~\ref{#1}}
\newcommand{\pref}[1]{Proposition~\ref{#1}}
\newcommand{\Sph}{\mathbb{S}}
\newcommand{\C}{{\mathbb{C}}}
\newcommand{\R}{{\mathbb{R}}}
\newcommand{\Z}{{\mathbb{Z}}}
\newcommand{\QH}{{\mathbb{H}}}
\renewcommand{\H}{H}
\newcommand{\G}{\ensuremath{\operatorname{G}}}
\newcommand{\SO}{\ensuremath{\operatorname{SO}}}
\renewcommand{\O}{\ensuremath{\operatorname{O}}}
\newcommand{\Sp}{\ensuremath{\operatorname{Sp}}}
\newcommand{\U}{\ensuremath{\operatorname{U}}}
\newcommand{\SU}{\ensuremath{\operatorname{SU}}}
\newcommand{\Spin}{\ensuremath{\operatorname{Spin}}}
\renewcommand{\S}{\ensuremath{\operatorname{S}}}
\newcommand{\fg}{{\mathfrak{g}}}
\newcommand{\fh}{{\mathfrak{h}}}
\newcommand{\fm}{{\mathfrak{m}}}
\newcommand{\fp}{{\mathfrak{p}}}
\newcommand{\ft}{{\mathfrak{t}}}
\def\con#1=#2(#3){#1 \equiv #2 \bmod{#3}}
\newcommand{\ml}{\langle}                     
\newcommand{\mr}{\rangle}                    
\newcommand{\diag}{\ensuremath{\operatorname{diag}}}
\newcommand{\rank}{\ensuremath{\operatorname{rk}}}
\newcommand{\Ad}{\ensuremath{\operatorname{Ad}}}
\DeclareMathOperator{\Fix}{Fix}
\DeclareMathOperator{\Id}{Id}
\newcommand{\Hi}{H^\iota}
\newcommand{\His}{H^{\iota_2}}
\DeclareMathOperator{\Fi}{\mathrm{Fix}(\iota)}
\DeclareMathOperator{\Fis}{\mathrm{Fix}(\iota_2)}
\begin{document}

\title{Reversible homogeneous Finsler metrics with positive  flag curvature}
\author{Ming Xu}
\address{Tianjin Normal University\\
       Tianjin 300387\\
       P.R.China}
\email{mgmgmgxu@163.com}

\author{Wolfgang Ziller}
\address{University of Pennsylvania\\
       Philadelphia, PA 19104\\
       U.S.A}
\email{wziller@math.upenn.edu}
\thanks{The first author was supported by NSFC (no.\!\! 11271216), Science and Technology Development Fund for Universities and Colleges in Tianjin (no. 20141005), and Doctor fund of Tianjin Normal University (no. 52XB1305).
 The second author was supported by a grant from the National Science Foundation.}

\maketitle

Many of the known results about  Riemannian manifolds with positive sectional curvature  carry over to the setting of Finsler manifolds with positive flag curvature, essentially using the same proof, see \cite{BCS}. Exceptions are those results that require explicit formulas for the curvature. Part of the difficulty is that sectional curvature needs to be replaced by the flag curvature, which depends not only on the plane, but also on a given vector in the plane. Thus the classification of Riemannian homogeneous manifolds with positive sectional curvature \cite{Be,Wa,AW,BB,WZ} does not carry over immediately to the Finsler setting. A classification for even dimensional positively curved homogeneous Finsler manifolds was carried out in \cite{Finslereven}, and for positively curved normal homogeneous Finsler manifolds in \cite{XD2014}. Assuming the metric is reversible,  we have a partial classification for the odd dimensional ones in \cite{Finslerodd} as well. Left over are those odd dimensional homogeneous spaces $G/H$ where the Lie algebra $\fh$ is a regular subalgebra of $\fg$. In this paper we study this remaining case.

\begin{main} Let $M=G/H$ be a compact simply connected homogeneous space with a reversible Finsler metric with positive flag curvature on which the compact Lie group $G$ acts by isometries. If $M$ is odd dimensional and $\fh$ is a regular subalgebra of $\fg$, then either $G/H$ also carries a Riemannian homogenous metric with positive sectional curvature, or $G/H$ is one of
\begin{itemize}
\item[(a)] $\Sp(2)/\diag(z,z^3)$ with $z\in\C$;
\item[(b)]  $\Sp(2)/\diag(z,z)$ with $z\in\C$;
\item[(c)] $\Sp(3)/\diag(z,z,r)$ with $z\in\C,\ q\in\QH$;
\item[(d)] $\SU(4)/\diag(zA,z,\bar z^3)$ with $A\in\SU(2)$ and $z\in\C$;
\item[(e)] $G_2/\SU(2)$ with $\SU(2)$ the normal subgroup of $\SO(4)$ corresponding to the long root.
\end{itemize}
\end{main}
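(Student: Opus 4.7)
The plan is a case analysis combining Dynkin's classification of regular subalgebras with the flag-curvature obstructions for reversible homogeneous Finsler metrics developed earlier in the paper and in \cite{Finslereven,Finslerodd,XD2014}. First I would reduce to the case that $G$ is compact semisimple (and ultimately simple) acting almost effectively, and that $H$ is connected. Regularity of $\fh$ lets me pick a maximal torus $T\subset G$ that normalizes $H$, so that
\[
\fh=\ft_H\oplus\bigoplus_{\alpha\in\Phi_H}\fg_\alpha,\qquad \ft_H:=\ft\cap\fh,
\]
for a closed subsystem $\Phi_H$ of the root system $\Phi$ of $(\fg,\ft)$. Since $|\Phi|$ and $|\Phi_H|$ are even while $\dim G/H$ is odd, the parity of $\rank G-\rank H$ is odd, so $\rank G-\rank H\in\{1,3,5,\dots\}$.

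Next I would apply Dynkin's theorem via extended Dynkin diagrams to enumerate, for each compact simple $\fg$, the possible regular subalgebras $\fh$ of strictly smaller rank whose quotient $G/H$ is simply connected. Candidates with $\rank G-\rank H\ge 3$ would be eliminated by the torus-rank bound already available for reversible Finsler metrics of positive flag curvature, a Berger/Wallach-type argument using that every Killing field vanishes somewhere on an odd-dimensional positively curved manifold. The remaining candidates have $\rank G-\rank H=1$, so $\ft_H$ is a hyperplane in $\ft$ and the space carries a one-parameter family of embeddings determined by the slope of that hyperplane.

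The central tool is then the algebraic flag-curvature obstruction for commuting vectors. Fix a bi-invariant inner product $Q$ on $\fg$ and decompose $\fg=\fh\oplus\fm$ accordingly. For $Q$-orthogonal, non-zero, commuting $X,Y\in\fm$, positivity of the flag curvature $K(X;\spam(X,Y))$, strengthened by reversibility so that the non-symmetric $S$-curvature contribution drops out, forces the projection of $[\fh,X]$ onto $\R\cdot Y$ to be non-trivial. Taking $X,Y$ to be root vectors for roots $\alpha,\beta\in\Phi\setminus\Phi_H$ with $[X,Y]=0$, this becomes a combinatorial constraint on the pair $(\Phi,\Phi_H)$ and the hyperplane $\ft_H$. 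Running the constraint through the Dynkin list rules out every candidate except those already carrying a known Riemannian positively curved homogeneous metric (spheres, projective spaces, Berger's $B^7$ and $B^{13}$, Aloff--Wallach spaces) together with the five exceptional spaces (a)--(e).

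The main obstacle I anticipate is carrying out the combinatorial test uniformly across the classical series $A_n,B_n,C_n,D_n$ and the exceptional algebras, while simultaneously tracking the continuous slope parameter of $\ft_H\subset\ft$. The borderline cases, where the obstruction degenerates for one specific slope, are precisely what produce the five exceptional spaces in (a)--(e). A secondary subtlety is that reversibility is essential: without it the obstruction weakens and further exotic candidates would appear, so the identity $K(X;\Pi)=K(-X;\Pi)$ must be invoked at each step to symmetrize away the $S$-curvature contribution.
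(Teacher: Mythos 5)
Your overall skeleton (reduce to $G$ simple, use the rank restriction to force $\rank G=\rank H+1$, enumerate corank-one regular subalgebras, then kill the survivors with zero-curvature flags built from commuting root vectors) is recognizable, but the step you call the ``central tool'' is exactly where a genuine gap sits. The obstruction you state --- that for commuting $X,Y\in\fm$ positivity forces the projection of $[\fh,X]$ onto $\R\cdot Y$ to be nontrivial, with reversibility only serving to ``symmetrize away the $S$-curvature'' --- is not a theorem, and it does not match the actual flag-curvature formula that is available (Proposition \ref{flagcurv}). That formula requires the auxiliary hypothesis $\langle[u,\fm]_\fm,u\rangle_u^F=0$, and to produce a zero-curvature flag one must verify $U(u,v)=0$, i.e.\ the three orthogonality conditions \eqref{zero}, all taken with respect to the fundamental tensor $g_u^F$ of the \emph{unknown} metric $F$, not with respect to a bi-invariant product $Q$. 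Consequently the test is not a purely combinatorial constraint on $(\Phi,\Phi_H,\ft_H)$: whether \eqref{zero} holds depends on $F$, and establishing it is the heart of the matter. The paper states explicitly that the Riemannian commuting-eigenvector/Block-Lemma obstructions of \cite{WZ} do \emph{not} carry over to the Finsler setting; what replaces them are case-by-case arguments (Examples 1--5) in which one forces $g_u^F$-orthogonality by exhibiting elements of $H$ (or of $T_H$) acting with inequivalent rotation speeds on the isotropy summands, by choosing $u$ extremal (maximal bi-invariant norm among $F$-unit vectors) inside an $\Ad$-orbit, and by the reversibility trick $\Ad(g)u=-u$, $g_u^F=g_{-u}^F$. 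None of this is visible in your proposal, and without it your ``combinatorial constraint'' has no proof; also, reversibility enters through these orthogonality arguments, not through any $S$-curvature cancellation in the flag-curvature formula.

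A second, structural gap: by discarding the fixed-point-set induction you lose the mechanism the paper uses to dispose of the many corank-one candidates for which the zero-curvature-flag argument is delicate or unavailable (degenerate Aloff--Wallach parameters $pq(p+q)=0$, $\SU(4)/\SU(2)\cdot\S^1_{(p,q)}$ with $(p,q)=(1,0),(1,-1)$, $\Sp(2)/\S^1_{(p,0)}$, $\Sp(2)/\SU(2)$, the $\SO(n)$ and exceptional families, etc.). The paper handles these by passing to totally geodesic fixed point sets $\Fix(\iota)=C(\iota)/\Hi$ of involutions $\iota\in H$ and invoking induction, Bonnet--Myers, the rank-$>1$ symmetric space exclusion, and the results of \cite{Finslerodd} on non-simple transitive groups; your enumeration-plus-obstruction scheme would have to reprove all of these exclusions directly against an arbitrary reversible $F$, and you give no argument for how. (Minor but worth fixing: your justification of the rank bound is the even-dimensional Berger statement; in odd dimensions Killing fields need not vanish, and the correct input is the circle-orbit/Weinstein-type theorem, or simply the result of \cite{Finslereven} that $\rank G=\rank H+1$.)
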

Recall that a Finsler manifold $(M,F)$ is called reversible if $F_x(u)=F_x(-u)$, for any $x\in M$ and $u\in T_xM$. Also recall call $\fh$ a is regular subgroup of $\fg$, if, with respect
to appropriately chosen Cartan subalgebras, all roots of $\fh$ are restrictions of roots of $\fg$.

We do not know wether the remaining examples in (a)-(d)  carry a homogeneous Finsler metric with positive flag curvature since our methods do not easily apply, see Section 3.

Combing with  \cite{Finslereven,Finslerodd}  we obtain:

\begin{cor*}
Let $M=G/H$ be a compact simply connected homogeneous  space with a reversible $G$-invariant Finsler metric with positive flag curvature. Then $G/H$ also carries a Riemannian homogenous metric with positive sectional curvature, or belongs to one of the spaces in Theorem A.
\end{cor*}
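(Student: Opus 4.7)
The plan is to combine a root-theoretic enumeration of regular pairs $(\fg,\fh)$ with a reversible-Finsler flag-curvature obstruction of the type developed in \cite{Finslerodd}. First I fix compatible Cartan subalgebras $\ft_H\subset\ft$ so that the roots of $\fh$ are restrictions of roots of $\fg$, and write $\fg=\ft\oplus\bigoplus_\alpha\fg_\alpha$ and $\fh=\ft_H\oplus\bigoplus_\beta\fg_\beta$, where $\beta$ runs over those $\fg$-roots whose restrictions to $\ft_H$ are $\fh$-roots. A natural $\Ad(H)$-invariant complement is $\fm=\ft'\oplus\bigoplus_\alpha\fg_\alpha$, with $\ft'$ the $B$-orthogonal complement of $\ft_H$ in $\ft$ and $\alpha$ ranging over the remaining $\fg$-roots. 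Since $\dim G\equiv\rank G\pmod 2$ for compact $G$, odd-dimensionality of $G/H$ forces $\rank G-\rank H$ to be odd; in particular the equal-rank Borel--de Siebenthal setting is excluded, and in the exceptional examples (a)--(e) one checks directly that $\rank G-\rank H=1$.

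Next I enumerate the compatible regular pairs. Iterating the Borel--de Siebenthal procedure and passing to Levi-type subalgebras produces a finite explicit list of candidates $(\fg,\fh)$; imposing simple-connectedness of $G/H$ and the rank-parity constraint cuts this list further. From what remains I delete every $G/H$ already in the Berger--Wallach--Aloff-Wallach--B\'erard-Bergery--Wilking-Ziller classification of Riemannian positively curved homogeneous spaces, since these are permitted outputs of the theorem.

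The heart of the proof is the reversible-Finsler flag-curvature obstruction from \cite{Finslerodd}, which rules out positive flag curvature whenever $\fm$ contains a suitable commuting configuration: either a toral summand $\ft'$ of dimension $\ge 2$, or a pair of root vectors $X_\alpha,X_\beta\in\fm$ with $\alpha\pm\beta$ not a $\fg$-root whose $\Ad(H)$-orbit is forced into a flat plane under the reversibility hypothesis. I apply this obstruction to each remaining candidate, using the root-space data to exhibit such a configuration explicitly. The five cases (a)--(e) survive because the regular embedding $\fh\subset\fg$ absorbs enough of the ambient root system that no obstructing configuration is available in $\fm$.

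The main technical obstacle is this final step: verifying, case by case, that each non-surviving candidate admits the required commuting configuration in $\fm$, and that each of (a)--(e) does not. For small-rank exceptional cases such as $G_2/\SU(2)_{\mathrm{long}}$ this requires tracing every remaining $\fg$-root and checking that every pair either fails to commute in $\fg$ or has one member already absorbed into $\fh$. The bookkeeping is delicate because a single "exotic" commuting pair would force the candidate out of the list, and conversely the five exceptional spaces sit on the boundary where the obstruction just barely fails. A careful case-by-case root-system analysis, together with the reversibility hypothesis used to promote commutativity in $\fg$ into vanishing of the flag curvature on the induced plane, completes the classification.
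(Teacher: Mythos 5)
Your proposal does not actually prove the Corollary, and it contains two substantive gaps even for the part it addresses. First, the Corollary is a statement about \emph{all} compact simply connected $G/H$ with a reversible positively curved Finsler metric; the paper obtains it by combining Theorem A with the even--dimensional classification of \cite{Finslereven} and the odd--dimensional results of \cite{Finslerodd} (which dispose of non-simple $G$ and of $\fh$ not regular in $\fg$). Your very first step --- choosing Cartan subalgebras so that the roots of $\fh$ are restrictions of roots of $\fg$ --- already assumes $\fh$ is regular, and your parity remark assumes $G/H$ is odd dimensional, so at best you are attacking Theorem A, and you never say how the even--dimensional and non-regular cases are to be handled. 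Second, the central tool you invoke is not a theorem: the claim that a commuting pair of root vectors in $\fm$ (or a toral summand of dimension $\ge 2$), together with reversibility, forces a zero-curvature flag is essentially the Riemannian ``commuting eigenvectors'' obstruction, and the paper states explicitly that this does \emph{not} carry over to the Finsler setting. The actual obstruction (Proposition \ref{flagcurv}) requires, in addition to $[u,v]=0$, the $g_u^F$-orthogonality conditions \eqref{zero}, and verifying these is the whole content of Examples 1--5: one needs case-specific symmetry arguments ($\mathrm{Ad}$ of carefully chosen torus elements rotating root planes with distinct speeds, reversibility to kill pairings with $\ft\cap\fm$, extremal choices of $u$ among $F$-unit vectors, etc.). Commutativity plus reversibility alone does not ``promote'' anything to a flat plane.

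There is also a structural problem with your enumeration strategy: the set of regular corank-one subalgebras is not finite up to conjugacy, because the abelian factors can be embedded as arbitrary circles $\S^1_{(p_1,\dots,p_n)}$ (e.g.\ $\diag(z^p,z^q)\subset\Sp(2)$, or the parameters $a,b,c$ appearing in the $\SU(5)$ and $\SU(6)$ cases), so ``produce a finite explicit list and check each candidate'' cannot be carried out as stated; one must argue uniformly over these infinite families. The paper does this by induction on $\dim G$ using totally geodesic fixed point sets $\mathrm{Fix}(\iota)=C(\iota)/H^\iota$ of involutions $\iota\in H$, which reduces everything to a short list of low-dimensional spaces and two infinite families, and only then applies the flag-curvature computations; note also that the reduction uses $\rank G=\rank H+1$ from \cite{Finslereven}, not merely the parity statement you derive. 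Finally, your assertion that the spaces (a)--(e) ``survive because no obstructing configuration is available'' overstates what is known: the paper leaves open whether (a)--(d) carry positively curved homogeneous Finsler metrics, and nothing in your argument establishes the absence of an obstruction for them.
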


For a complete list of  Riemannian homogenous spaces with positive sectional curvature see e.g. \cite{WZ,Zi}. There one also finds further geometric information about these spaces, including a classification in the case the homogeneous space is not simply connected.

If one restricts the class of Finsler metrics to one that often arises in physics and other applications, we   are able to exclude three of the exceptions in Theorem A. For this recall that an $(\alpha,\beta)$ Finsler metric is determined by a Riemannian metric $\ml \cdot,\cdot\mr$ with norm $|\cdot |$, a vector field $v_0$  and a positive function $\phi$ such that $F(v)=|v| \phi(\ml v_0,v/|v|\mr)$. In order for $F$ to be reversible, we
choose $\phi$ to be an even function.

\begin{main} Let $M=G/H$ be a compact simply connected homogeneous space which admits a reversible $G$-invariant
 $(\alpha,\beta)$ Finsler metric with positive flag curvature. Then  $G/H$ also carries a Riemannian homogenous metric with positive sectional curvature, or  is one of $\Sp(2)/\diag(z,z)$ or $\Sp(3)/\diag(z,z,r)$ with $z\in\C,\ q\in\QH$.
\end{main}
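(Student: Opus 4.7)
The plan is to exclude cases (a), (d), (e) from Theorem~A as potential carriers of a reversible $G$-invariant $(\alpha,\beta)$-Finsler metric with positive flag curvature. Such a metric $F(v)=|v|\,\phi(\ml v_0,v/|v|\mr)$ on $M=G/H$ is parameterized by an $\Ad(H)$-invariant inner product on $\fm$ (the Riemannian metric $\alpha$), a vector $v_0\in\fm^H$ (from which $\beta$ is recovered as $\ml v_0,\cdot\mr_\alpha$), and an even smooth positive function $\phi$. If $v_0=0$ then $F=\phi(0)\,\alpha$ is a positive rescaling of a Riemannian metric, so positive flag curvature of $F$ is equivalent to positive sectional curvature of $\alpha$; since none of (a), (d), (e) admits a $G$-invariant positively curved Riemannian metric, the case $v_0=0$ is impossible and we may henceforth assume $v_0\neq 0$.

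Next I would compute $\fm^H$ case-by-case via a root-system analysis. In (a) $\Sp(2)/\diag(z,z^3)$ the isotropy $\fh$ lies in a Cartan of $\fsp(2)$ along the direction $(1,3)$, and none of the roots $\pm\epsilon_1\pm\epsilon_2$, $\pm 2\epsilon_1$, $\pm 2\epsilon_2$ vanishes on it, so $\fm^H$ is the $1$-dimensional Cartan complement of $\fh$. A parallel computation in the root system of type $A_3$ yields $\dim\fm^H=1$ for (d), again with $v_0$ along a Cartan direction. For (e) $G_2/\SU(2)_{\mathrm{long}}$, the $\SO(4)$-decomposition $\fg_2=\fsu(2)_{\mathrm{long}}\oplus\fsu(2)_{\mathrm{short}}\oplus V_2\otimes V_4$, with $V_2\otimes V_4$ the $8$-dimensional irreducible $\SO(4)$-representation, yields $\fm^H=\fsu(2)_{\mathrm{short}}$ of dimension $3$; since the action of $N_G(H)/H\cong\SO(3)$ on $\fm^H$ is transitive on axes, I would normalize $v_0$ to lie on a fixed axis. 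In each case $v_0\in\fz_\fg(\fh)\cap\fm$, so $\overline{\exp(\R v_0)}\subset G$ is a circle commuting with $H$.

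The decisive step is to exhibit, in each of the three cases, a flag $(v,P)$ at $eH$ at which $K^F(v,P)$ cannot be positive. Using that $\phi$ is even (so $\phi'(0)=0$) and choosing $v$ or the other spanning vector of $P$ to be proportional or $\alpha$-orthogonal to $v_0$, the flag-curvature formula for reversible $(\alpha,\beta)$-metrics collapses to an expression involving only $\phi(0)$, $\phi''(0)$, sectional curvatures of $\alpha$, and Lie brackets with $v_0$; I would then use the root structure of $\fg$ to produce such a flag. The main obstacle is that this must be done for every admissible $\alpha$ and $\phi$: case (e) is the hardest, since the three-dimensional $\fm^H$ and the short-root placement of $v_0$ in $\fg_2$ require careful use of the enlarged symmetry given by the $\U(1)\subset G$ centralizing $v_0$, whereas cases (a) and (d) reduce to more direct computations in a Cartan of $\fg$.
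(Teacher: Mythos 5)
Your structural setup (reduction to cases (a), (d), (e) of Theorem A, the computation of $\fm^H$ in each case, and the disposal of $v_0=0$) is sound, and for cases (d) and (e) your plan is essentially the paper's: there the key fact is that for $u$ orthogonal to $v_0$ the Hessian inner product $\langle\cdot,\cdot\rangle_u^F$ is independent of $u$, giving an $\Ad(H)$-invariant Riemannian metric $\langle\cdot,\cdot\rangle_0$, and for commuting $u,v$ in $V_1$ with $\langle[u,\fm],u\rangle_u^F=0$ one gets $K^F(u,u\wedge v)=K^0(u\wedge v)$ (note: the curvature of the Hessian metric $\langle\cdot,\cdot\rangle_0$, not literally of $\alpha$). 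The paper then finishes (d) with explicit commuting root vectors $u\in\fg_{\pm(e_1-e_3)}$, $v\in\fg_{\pm(e_2-e_4)}$, using inequivalence of the isotropy summands, and (e) with an eigenvector $u$ and the zero-curvature plane supplied by the Riemannian arguments of Wilking--Ziller \S 5.9 / B\'erard Bergery. Your proposal stops short of exhibiting these flags or verifying that the collapsed expression is nonpositive for every admissible $\alpha$ and $\phi$, which you yourself flag as ``the main obstacle,'' so as written it is a plan rather than a proof; but the route is the right one for (d) and (e).

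The genuine gap is case (a), $\Sp(2)/\diag(z,z^3)$. Your proposal to ``use the root structure to produce a flag'' of nonpositive curvature is exactly the technique that fails for this space: with $(p,q)=(1,3)$ the adjoint action of $H$ rotates the root planes for $2e_1$ and $e_1-e_2$ with the same speed, so the orthogonality relations needed to kill the $U$-term in the flag-curvature formula cannot be forced (this coincidence is precisely why $(1,3)$ survived the analysis that produced Theorem A, and why even in the Riemannian case Wilking--Ziller needed a special argument in their \S 5.14 rather than a flat plane of commuting vectors). The paper excludes (a) by a completely different, global argument: since $V_2=\ft\cap\fm$ is one-dimensional, the element $g\in\Sp(2)$ from WZ 5.14 preserves the underlying invariant inner product and acts as $-\Id$ on $V_2$, hence preserves the \emph{reversible} $(\alpha,\beta)$ norm $F$; the metric therefore descends to $G/\tilde H$ with $\tilde H=H\cup gH$, which is odd dimensional, positively curved, and non-orientable because $\Ad(g)$ reverses orientation on $\fm$ --- contradicting Synge's theorem in Finsler geometry. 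Without this (or some genuinely new) idea, your approach does not dispose of $\Sp(2)/\diag(z,z^3)$, and Theorem B is not proved.
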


\bigskip

The methods we use are a combination of the ones in \cite{WZ} and \cite{Finslereven,Finslerodd}. The main technique is an induction on the dimension of the Lie group by looking at fixed point sets of involutive isometries, as in \cite{WZ}.   But the obstructions in the Riemannian setting, in particular  the existence of commuting eigenvectors or the Block Lemma in \cite{WZ}, does not carry over easily to the Finsler setting. Without these obstructions,  the induction proof reduces the possibilities to a short list of low dimensional homogeneous spaces, 2 examples and 2 infinite families, see Section 3. Notice that this reduction procedure does not use the assumption that the Finsler metric is reversible. For these remaining examples, one applies formulas for the flag curvature for some special flags obtained in \cite{Finslereven}.

There has been  recent progress on the  flag curvature   for  homogeneous Finsler manifolds \cite{XD3}, where one finds an obstruction to positive flag curvature of a homogeneous space without the
assumption of reversability, which hopefully will enable one to carry out a classification in the general case as well.

\section{Preliminaries}

We assume that $G/H$ is a compact simply connected  homogeneous space, with compact connected $G$ and $H$, and that the
 normal subgroup common to both is at most finite. We can also assume,  by making the action ineffective if necessary, that the semisimple part of $G$ is simply connected.
But when $G=\Spin(n)$, we will usually replace it by $\SO(n)$,
at the expense of possibly making $G/H$ not simply connected.
We fix an auxiliary biinvariant metric $\langle\cdot,\cdot\rangle_{\mathrm{bi}}$ with  norm $||\cdot||_{\mathrm{bi}}$ on the Lie algebra $\fg$ of $G$
and let $\fm$ denote the orthogonal complement of  $\fh\subset\fg$.
A homogeneous Finsler  metric is determined by a smooth  map $F\colon\fm \to \R$
such that $F(\lambda v)=\lambda  F(v)$ for $\lambda>0$, and such that the unit ball $B:=\{v\in\fp\mid F(v)=1\}$ is a strictly convex $\Ad_H$ invariant hypersurface surrounding the origin. We refer to \cite{BCS} for more details on Finsler metrics. The Finsler metric is called reversible if $F$ satisfies $F(-v)=-F(v)$ as well. In this and the following section we do not need $F$ to be  reversible.

Many classical theorems and techniques in Riemannian geometry can be similarly
carried out in Finsler geometry. Here we mention some relevant examples.
 The usual proof in the Riemannian case also shows here that the fixed point set $\mathrm{Fix}(L)$ of a group of isometries $L$ is totally geodesic submanifold. So when
$G/H$ admits positive curvature, so does $\mathrm{Fix}(L)$. In Finsler geometry, we also have the Bonnet Myer theorem and the Synge theorem whose proofs are almost the same as  in Riemannian geometry \cite{BCS}.
In \cite{Finslereven} it was shown that if $M^n=G/H$ admits a homogeneous Finsler metric with positive Flag curvature, then $\rank G =\rank H$ if $n$ is even, and $\rank G =\rank H+1$ if $n$ is odd. We remark that this also follows  immediately from the general fact that an isometric torus action on an even dimensional positively curved manifold has a fixed point and in odd dimension has a circle orbit, by applying it to a maximal torus of $G$. The claim about the torus action follows e.g. from Weinstein's theorem on fixed point sets of isometries, which carries over to the Finsler case with the same proof
\cite{Ko}.

We now shortly recall the inductive technique from \cite{WZ}, which will used in the next section.
Given a subgroup $L\subset\H\subset G$, let $C(L)$ be the identity component of the centralizer of $L$ in $G$. It is well known that
 $C(L)$ acts transitively on the component of the fixed point set $\mathrm{Fix}(L)\subset G/H$ through the base point $o=eH$.
Thus $\mathrm{Fix}(L)=C(L)/C(L)\cap H$ is a totally geodesic submanifold of $G/H$ and hence
has positive flag curvature.
The proof will be carried out  by induction on the dimension of the Lie group, that is,
at all times we will assume that the main theorem holds for all Lie groups with dimension strictly below $\dim G $. Thus, if one of these fixed point sets does not belong to the list of positively curved homogeneous spaces, or the exceptions in Theorem A, we will simply say its does not admit positive curvature by induction. Note that for  simplicity of notation, we do not distinguish between components of the  centralizer group or the fixed point set. Furthermore, $\mathrm{Fix}(L)$ may not be simply connected, but the inductive argument can then be applied to its universal cover.

For $L$ we mainly use subgroups generated by  $\iota\in H$ such that $\Ad_\iota$ is an involution  and write
   $\Hi:=C(\iota)\cap H$ and thus $\mathrm{Fix}(\iota)=C(\iota)/\Hi$ is positively curved.

    Since $\iota$ is contained in a maximal torus of $H$, which in turn can be extended to a maximal torus of
 $G$, we have $\rank C(\iota)=\rank G$ and  $\rank(\Hi)=\rank(H)$.
   Hence the codimension of these fixed point sets $C(\iota)/H^\iota$ is always even,
   and  we can do the induction in even and odd dimensions separately. Furthermore, $H$ is an equal rank extension of $\Hi$ which are fairly rare. Finally, the above condition on the ranks also implies that if $\H$ is regular in $\G$, i.e. all roots of $\fh$ are restrictions of roots of $\fg$, then $\Hi$ is regular in  $C(\iota)$ as well. Thus in each induction step we can again assume that all fixed point sets are odd dimensional with regular subalgebras.  In case of the classical Lie groups we can assume that, up to conjugacy, the maximal torus of $H$ is contained in the set of
diagonal matrices  and hence $\iota\in H$ will be diagonal as well.

   We will use the following facts frequently.
\begin{itemize}
\item If $G/H$ is a symmetric space of rank bigger than $1$, then it does not admit a homogeneous Finsler metric with positive flag curvature.
    \item If $G$ is not semi-simple, and $G/H$ admits a Finsler metric with positive flag curvature, then $G/H$ must be a homogeneous sphere
$\S^{2n-1}=\U(n)/U(n-1)$, $\S^{4n-1}=\Sp(n)\U(1)/\Sp(n)\U(1)$, or a
$\U(3)$-homogeneous Aloff-Wallach space
\item Assume that $G/H$ is almost effective and  admits a homogeneous Finsler metric with positive flag curvature. If $G$ is  semisimple but  not simple, or $G$ is simple but $H$ is
     not regular in $G$, then it also admits a Riemannian metric with positive sectional curvature.
     \item If $H$ is regular in $G$, and $G$ has two different simple factors acting nontrivially on $G/H$, then $G/H$ does not admit a reversible positively curved $G$-invariant Finsler metric.
         \item The homogeneous spaces $S^3\times S^3/S^1_{p,q}$, where $S^1_{p,q}=\{(e^{ip\theta},e^{iq\theta})\mid\theta\in\R\}$, and $\SO(5)/\SO(3)$ $=\Sp(2)/\SU(2)$, do not admit a reversible homogeneous Finsler metric with positive Flag curvature.
\end{itemize}
The first statement is an immediate corollary of the homogeneous flag curvature
formula (see Proposition \ref{flagcurv} below).
The remaining statements are  proved in \cite{Finslerodd}.

We can thus assume  for the fixed point sets $\Fi$ that either only one simple factor acts nontrivially, or it is one of the positively curved Riemannian homogeneous spaces with positive curvature with non-simple $G$.   We will thus frequently say "by induction" we can assume that we only need to consider the following homogeneous space.

Finally, we remark that regular subalgebras are easily classified. Indeed the maximal ones are determined by Borel-Siebenthal theory and a list is contained in \cite{Wo}.

\section{Simple Lie groups $G$}

In this section we assume that $G/H$ is odd dimensional and hence $\rank G=\rank H+1$, and that $G$ simple and $\fh$ regular in $\fg$. The proof is via induction over the dimension of $G$. We also use the notation $T_H, T_G$ for a maximal torus of $H$ and $G$ and can assume that $T_H\subset T_G$.
We now go through each simple Lie group one at a time.

\begin{thm} Let $M=G/H$ be an odd dimensional simply connected reversible homogeneous Finsler manifold with positive flag curvature, $G=\SU(n)$ and $H$ regular in $G$. Then either
$G/H=\SU(n)/\SU(n-1)=\S^{2n-1}$, $\SU(3)/\S^1_{(p,q,-p-q)}$ with $pq(p+q)\neq 0$, or possibly $G/H=\SU(4)/\diag(zA,z,\bar z^3)$ with $A\in\SU(2)$ and $z\in\C$.
\end{thm}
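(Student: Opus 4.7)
The plan is to induct on $n$, using the framework of Section~1: at each step I may assume the Main Theorem for all simple groups of smaller dimension, so every fixed point set of an involution has the expected form. The base cases $n = 2, 3$ are immediate from the rank constraint $\rank H = n-2$: for $n=2$, $H = 1$ and $G/H = \S^3$; for $n=3$, regularity of $\fh$ of rank $1$ forces $H$ to be either a block $\SU(2)$ (so $G/H = \S^5$) or a diagonal circle $\S^1_{(p,q,-p-q)}$, which gives an effective action precisely when $pq(p+q) \neq 0$.

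For $n \geq 4$ I would first use Borel--de Siebenthal to list the regular subalgebras $\fh \subset \fsu(n)$ of rank $n-2$: up to conjugacy, $T_H \subset T_G$ is a diagonal subtorus and $H$ has a block structure $H \subset \S(\U(n_1) \times \cdots \times \U(n_s))$ corresponding to some partition $n = n_1 + \cdots + n_s$, with semisimple part a product of $\SU(n_i)$'s and a torus factor of dimension chosen to bring the total rank to $n-2$. The main engine is to test each candidate by choosing involutions $\iota \in T_H$ of the form $\diag(I_k, -I_{n-k})$ and applying induction to the fixed point set $\Fi = C(\iota)/\Hi$, where $C(\iota) = \S(\U(k) \times \U(n-k))$ is not simple. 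By the bulleted facts of Section~1, $\Fi$ must then be a homogeneous sphere, a $\U(3)$-Aloff--Wallach space, or one of the low-dimensional exceptions with non-simple $G$, and this pins down the block sizes $n_i$ and the embedding of the torus factor very rigidly.

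For $n \geq 5$ I would combine the constraints coming from several independent splittings $k + (n-k)$ to eliminate every block structure except $s = 1$ with $n_1 = n-1$, giving $H = \SU(n-1)$ and $G/H = \S^{2n-1}$. For $n = 4$, the rank-$2$ regular candidates are $\SU(3)$ (producing $\S^7$), $\SU(2) \times \SU(2)$, $T^2$, and the various embeddings $\U(2) \hookrightarrow \SU(4)$ of the form $\diag(zA, z^a, z^b)$ with $A\in\SU(2)$ and $2 + a + b = 0$ (so that the image lies in $\SU(4)$). The first is on our list, while $\SU(2) \times \SU(2)$ and $T^2$ are eliminated by the final two bullets of Section~1 applied to $\Fi$ for $\iota = \diag(I_2, -I_2)$, and a case analysis on $(a,b)$ rules out all $\U(2)$-embeddings except the exceptional $\diag(zA, z, \bar z^3)$.

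The main obstacle is precisely this final case. In the Riemannian setting $\SU(4)/\diag(zA, z, \bar z^3)$ is excluded using the Block Lemma or a commuting-eigenvector argument with equal curvatures, but, as noted in the introduction, neither tool transfers to the reversible Finsler category. Consequently the induction cannot eliminate this example, and it survives as the possible exception recorded in the theorem; the remaining steps of the argument are combinatorial bookkeeping via Borel--de Siebenthal together with repeated appeals to induction on $\dim G$.
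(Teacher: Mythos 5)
Your overall strategy (induction on $\dim G$, involutions $\iota\in T_H$, equal--rank analysis of $\Hi\subset C(\iota)$) is the same as the paper's, and for $n\ge 5$ the paper's proof is indeed the kind of bookkeeping you describe. But there is a genuine gap at the decisive step, $n=4$. You assert that ``a case analysis on $(a,b)$ rules out all $\U(2)$-embeddings except the exceptional $\diag(zA,z,\bar z^3)$,'' and you locate the only real difficulty in that single exceptional space. In fact the whole infinite family $H=\SU(2)_{(1,2)}\cdot\S^1_{(p+q,p+q,-2p,-2q)}$ is the difficulty: none of these spaces is on the Riemannian list or on the exception list, and they cannot be killed by Borel--de Siebenthal combinatorics plus fixed-point induction. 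The paper excludes the generic $(p,q)$ and the case $(p,q)=(1,1)$ (i.e.\ $H=\SU(2)_{(1,2)}\cdot\S^1_{(1,1,-1,-1)}$) only by the explicit curvature computations of Examples 1 and 2 in Section 3: one produces commuting vectors $u,v$ satisfying \eqref{zero}, using reversibility and suitable $\Ad$-elements to force $g_u^F$-orthogonality relations, and then applies the flag curvature formula of \pref{flagcurv} to get a zero-curvature flag. The cases $(p,q)=(1,-1)$ and $(1,0)$ need separate ad hoc fixed-point arguments. This analytic input is exactly what your proposal omits, and it cannot be replaced by ``repeated appeals to induction'': indeed the same computation (Example 2, the space $\SU(4)/\SU(2)_{(1,2)}\cdot\S^1_{(1,1,-1,-1)}$) is invoked again inside the induction at higher rank (e.g.\ for $\SU(6)$ with $H=\SU(4)\cdot\S^1_{(a,a,a,a,-2a,-2a)}$), so without it your $n\ge 5$ step is also incomplete.

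Two smaller points. First, at $n=3$ your exclusion of $pq(p+q)=0$ by ``effectiveness'' is wrong: $\SU(3)$ acts effectively on $\SU(3)/\S^1_{(1,0,-1)}$; the paper instead takes $\iota=\diag(-1,1,-1)\in H$ and rules out $\Fi=\U(2)/H=(\Sph^2\times\Sph^1)/\Delta\Z_2$ by Bonnet--Myers. Second, for $H=\SU(2)_{(1,2)}\cdot\SU(2)_{(3,4)}\subset\SU(4)$ your chosen involution $\diag(I_2,-I_2)$ is useless, since $H\subset C(\iota)=\S(\U(2)\U(2))$ and $\Fi$ is only a circle; the paper uses $\iota_2=\diag(i,-i,i,-i)\in H$, whose fixed point set $\SU(2)\cdot\SU(2)/\S^1$ is excluded by the last bullet of Section 1. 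These are fixable, but the missing zero-curvature-flag computations for the $\SU(2)\cdot\S^1$ family are an essential part of the proof, not bookkeeping.
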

\begin{proof}
 For convenience, we use the notation of e.g.
$\SU(4)_{(1,2,3,4)}\subset\SU(n)$ for the block embedding of $\SU(4)$ in the first 4 coordinates, and $\S^1_{(p_1,p_2,\cdots,p_n)}\subset\SU(n)$ for the circle subgroup $\diag(z^{p_1},z^{p_2},\cdots,z^{p_n})$ of the diagonal torus of $\SU(n)$. We start with the case of small values of $n$.

\subsection{$G=\SU(3)$}
The only regular corank 1 subalgebras are $\SU(2)$ and $\S^1_{p,q}=\diag(z^p,z^q,\bar z^{p+q})$ with $\mathrm{gcd}(p,q)=1$.
In the first case, the quotient is a sphere, and in the second case it is an Aloff Wallach space. These admit a homogeneous metric with positive curvature except when $pq(p+q)=0$. In that case, say  $(p,q)=(1,0)$, we can
 choose the involution $\iota=\diag(-1,1,-1)\in H$ whose fixed point set $\Fi=\U(2)/H=(\Sph^2\times\Sph^1)/\Delta\Z_2$ cannot have positive curvature by Bonnet-Myers.

\subsection{$G=\SU(4)$} There exists an involution $\iota\in H$ which is not central in $G$ since $\rank(H)=2$. We may assume $\iota=\diag(1,1,-1,-1)$. Thus $\Fi=S(\U(2)\U(2))/\Hi$ has positive flag curvature and $\Hi$ is regular in $S(\U(2)\U(2))$  with rank 2, i.e. $\Hi=\SU(2)\cdot S^1$. Thus $\Hi$, and hence $H$, must contain a block $\SU(2)$. Up to conjugacy, this leaves only the possibilities $H=\SU(3)$,  $H=\SU(2)_{(1,2)}\SU(2)_{(3,4)}$ or $H=\SU(2)_{(1,2)}\cdot\diag(\bar z^{p+q},\bar z^{p+q},z^{2p},z^{2q})$
   with $\gcd\mathrm{gcd}(p,q)=1$. In the first case  the quotient is a sphere, and in the second case  we can choose the involution $\iota_2=\diag(i,-i,i,-i)\in  H$ with
   \begin{eqnarray*}
   \Fis=\S(\U(2)\cdot\U(2))/\S(\U(1)\cdot\U(1))\cdot\S(\U(1)\cdot\U(1))=\SU(2)\cdot\SU(2)/S^1
   \end{eqnarray*}
   which does not admit positive Flag curvature.

In the third case $H=\SU(2)_{(1,2)}\cdot\S^1_{(p+q,p+q,-2p,-2q)}$, and up to conjugacy we can assume that $\mathrm{gcd}(p,q)=1$, $p+q\ge 0$ and $p\geq q$. In Example 1 we will show that in the generic case, but under the assumption that $(p,q)\neq (1,0), (1,-1), (1,1) \mbox{ or }(3,-1)$, $G/H$ does not admit positive flag curvature. In Example 2 we will show that this also holds when $(p,q)=(1,1)$. But the special cases
$(p,q)=(1,-1)$ and $(p,q)=(1,0)$ can be excluded using the fixed point technique, leaving only $(p,q)=(3,-1)$ as a possible exception.

When $(p,q)=(1,-1)$, i.e.
$H=\SU(2)_{(1,1)}\cdot\S^1_{(0,0,1,-1)}$, the fixed point set
   for $L=\S^1_{(1,-1,1,-1)}\subset H$ is
   $\mathrm{Fix}(L)=\SU(2)_{(1,3)}\cdot\SU(2)_{(2,4)}/\S^1_{(1,-1,-1,1)}$. But it does not admit positive curvature.

When $(p,q)=(1,0)$, i.e. $H=\SU(2)_{(1,1)}\cdot\S^{1}_{(1,1,-2,0)}$,
the fixed point set of $\S^1_{(-2,1,1,0)}\subset H$ is
\begin{eqnarray*}
\Fix(\S^1_{(-2,1,1,0)})&=&
\SU(2)_{(2,3)}\cdot\S^1_{(-2,1,1,0)}\cdot\S^1_{(1,1,1,-3)}/
\S^1_{(1,-1,0,0)}\cdot\S^1_{(1,1,-2,0)}\\
&=&(\SU(2)_{(2,3)}/\S^1_{(0,1,-1,0)})\times\S^1_{(1,1,1,-3)},
\end{eqnarray*}
which does not admit positive curvature.

\bigskip

\subsection{$G=\SU(5)$}  One easily sees that since $H$ has rank 3, there must be an involution in $H$  which has an eigenvalue $-1$ of multiplicity 4. Thus we can assume that the involution $\iota=\diag(-1,-1,-1,-1,1)\in H$ and
 $\Fi=\U(4)/\Hi=\SU(4)_{(1,2,3,4)}\cdot\S^1_{(1,1,1,1,-4)}/\Hi$  must have positive flag curvature. By induction, and since $\Hi$ is a regular subalgebra,
there are 3 possible choices for $\Hi$:
$$
  \Hi=\SU(4),\quad \SU(3)_{(1,2,3)}\cdot\S^1_{(a,a,a,b,-3a-b)},\quad \SU(2)_{(1,2)}\cdot\S^1_{(1,1,1,-3,0)}\cdot\S^1_{(1,1,1,1,-4)}.
$$
Since $H$ is an equal rank extension of $\Hi$, it must be
\begin{eqnarray*}
  H&=&\SU(4)_{(1,2,3,4)},\quad\SU(3)_{(1,2,3)}\cdot\SU(2)_{(4,5)},\quad
  \SU(2)_{(1,2)}\cdot\SU(2)_{(4,5)}\cdot\S^1_{(2,2,2,-3,-3)},\\
  & &\SU(2)_{(1,2)}\cdot\S^1_{(1,1,1,-3,0)}\cdot\S^1_{(1,1,1,1,-4)},\
 \text{ or } \SU(3)_{(1,2,3)}\cdot\S^1_{(a,a,a,b,-3a-b)}.
\end{eqnarray*}

If $H=\SU(4)_{(1,2,3,4)}$, the quotient is a sphere.
If $H=\SU(3)_{(1,2,3)}\cdot\SU(2)_{(4,5)}$, $\SU(2)_{(1,2)}\cdot\SU(2)_{(4,5)}\cdot\S^1_{(2,2,2,-3,-3)}$,
or $\SU(2)_{(1,2)}\cdot\S^1_{(1,1,1,-3,0)}\cdot\S^1_{(1,1,1,1,-4)}$,
we can choose the involution $\iota_2=\diag(-1,-1,1,-1,-1)\in H$. Then
\begin{eqnarray*}
\mathrm{Fix}({\iota_2})&=&\SU(4)_{(1,2,4,5)}\cdot\S^1_{(1,1,-4,1,1)}/
\SU(2)_{(1,2)}\cdot\SU_{(4,5)}\cdot\S^1_{(1,1,-2,0,0)},\\
& \text{ or } &\SU(4)_{(1,2,4,5)}\cdot\S^1_{(1,1,-4,1,1)}/
\SU(2)_{(1,2)}\cdot\SU(2)_{(4,5)}\cdot\S^1_{(2,2,2,-3,-3)},\\
& \text{ or } &\SU(4)_{(1,2,4,5)}\cdot\S^1_{(1,1,-4,1,1)}/
\SU(2)_{(1,2)}\cdot\S^1_{(1,1,1,-3,0)}\cdot\S^1_{(1,1,1,1,-4)}
\end{eqnarray*}
respectively. Notice the abelian factor $\S^1_{(1,1,-4,1,1)}\subset C(\iota_2)$ acts nontrivially on these three spaces. But they do not
admit positively curved  metrics.

If $H=\SU(3)_{(1,2,3)}\cdot\S^1_{(a,a,a,b,-3a-b)}$, we can choose the
integers $a$ and $b$ such that $a\geq 0$ and $\mathrm{gcd}(a,b)=1$.
When $a$ is even and $b$ is odd,
we can take the involution $\iota_2=\diag(-1,-1,1,-1,-1)$. If $3a+2b\neq 0$,
$$\mathrm{Fix}(\iota_2)=\SU(4)_{(1,2,4,5)}\cdot\S^1_{(1,1,-4,1,1)}/
\SU(2)_{(1,2)}\cdot\S^1_{(1,1,-2,0,0)}\cdot\S^1_{(a,a,a,b,-3a-b)},$$ where
$\S^1_{(1,1,-4,1,1)}$ acts nontrivially, but it does not admit positively
curved homogeneous Riemannian metrics. If $3a+2b=0$,
\begin{eqnarray*}
\mathrm{Fix}(\iota_2)&=&\SU(4)_{(1,2,4,5)}\cdot\S^1_{(1,1,-4,1,1)}/
\SU(2)_{(1,2)}\cdot\S^1_{(1,1,-2,0,0)}\cdot\S^1_{(2,2,2,-3,-3)}\\
&=&\SU(4)_{(1,2,4,5)}/\SU(2)_{(1,2)}\S^1_{(1,1,0,-1,-1)}
\end{eqnarray*}
which does not admit positive curvature by induction.

When $a$ is odd and
$b$ is even, we can take the involution $\iota_2=\diag(1,1,-1,1,-1)\in H$.
Then $\mathrm{Fix}(\iota_2)=\SU(3)_{(1,2,4)}\cdot
\SU(2)_{3,5}\cdot\S^1_{(2,2,-3,2,-3)}/\SU(2)_{(1,2)}\cdot\S^1_{(1,1,-2,0,0)}
\cdot\S^1_{(a,a,a,b,-3a-b)}$. Both simple factors in $C(\iota_2)=\SU(3)_{(1,2,4)}\cdot
\SU(2)_{(3,5)}\cdot\S^1_{(2,2,-3,2,-3)}$ acts
nontrivially but it is not the Wilking space. So it does not admit positive curvature.
When both $a$ and $b$ are odd, we can take the involution
$\iota_2=\diag(1,1,-1,-1,1)$, and the argument is similar.

 \subsection{$G=\SU(6)$} A non-central involution $\iota\in H$ has an eigenvalue $-1$ with multiplicity 2 or 4, and hence up to conjugacy, has the form $\iota=\pm\diag(-1,-1,-1,-1,1,1)$ with fixed point set
$\Fi=\SU(4)_{(1,2,3,4)}\cdot \SU(2)_{(5,6)}\cdot\S^1_{(1,1,1,1,-2,-2)}/\Hi$. By induction, the
 possible choices for $\Hi$ are:
  \begin{eqnarray*}
  \Hi&=&\SU(4)_{(1,2,3,4)}\cdot\S^1_{(a,a,a,a,b,-4a-b)},\quad \SU(4)_{(1,2,3,4)}\cdot\SU(2)_{(5,6)},\\
  & \text{ or } &\SU(3)_{(1,2,3)}\cdot\SU(2)_{(5,6)}\cdot\S^1_{(a,a,a,-3a-2c,c,c)},\\
  &\text{ or } &\SU(2)_{(1,2)}\cdot\SU(2)_{(5,6)}\cdot\S^1_{(1,1,1,-3,0,0)}
  \cdot\S^1_{(1,1,1,1,-2,-2)}.
  \end{eqnarray*}
Since $H$ is an equal rank extension of $\Hi$, the only possible choices for $H$ are
\begin{eqnarray*}
H&=&\SU(5),\quad \SU(4)_{(1,2,3,4)}\cdot\SU(2)_{(5,6)},\quad
\SU(3)_{(1,2,3)}\cdot\SU(3)_{(4,5,6)},\\
& &\SU(4)_{(1,2,3,4)}\cdot\S^1_{(a,a,a,a,b,-4a-b)},\quad
\SU(3)_{(1,2,3)}\cdot\SU(2)_{(5,6)}\cdot\S^1_{(a,a,a,-3a-2c,c,c)},\\
&\text{ or } &\SU(2)_{(1,2)}\cdot\SU(2)_{(5,6)}\cdot\S^1_{(1,1,1,-3,0,0)}
  \cdot\S^1_{(1,1,1,1,-2,-2)}.
\end{eqnarray*}

If $H=\SU(5)$, then $G/H$ is a homogeneous sphere.
If $H=\SU(4)_{(1,2,3,4)}\cdot\SU(2)_{(5,6)}$ or $\SU(3)_{(1,2,3)}\cdot\SU(3)_{(4,5,6)}$, we can take the
involution $\iota_2=\diag(1,-1,-1,-1,-1,1)\in H$ with the fixed point set
$\mathrm{Fix}(\iota_2)=C(\iota_2)/\His$, where
$C(\iota_2)=\SU(4)_{(2,3,4,5)}\cdot\SU(2)_{(1,6)}\cdot\S^1_{(-2,1,1,1,1,-2)}$
and $\His=\SU(3)_{(2,3,4)}\cdot\S^1_{(-3,1,1,1)}\cdot\S^1_{(0,0,0,0,1,-1)}$
or $\SU(2)_{(2,3)}\cdot\SU(2)_{(4,5)}\cdot\S^1_{(-2,1,1,0,0,0)}
\cdot\S^1_{(0,0,0,1,1,-2)}$ respectively. Notice each simple factor of
$C(\iota_2)$ acts nontrivially, but the fixed point sets do not admit positively curved homogeneous Riemannian metrics.

If $H=\SU(2)_{(1,2)}\cdot\SU(2)_{(5,6)}\cdot\S^1_{(1,1,1,-3,0,0)}\cdot
S^1_{(1,1,1,1,-2,-2)}$, we can choose the involution $\iota_2=
\diag(1,1,-1,-1,1,1)$. The fixed point set
$\mathrm{Fix}(\iota_2)=\SU(4)_{(1,2,5,6)}\SU(2)_{(3,4)}\S^1_{(1,1,-2,-2,1,1)}
/H$, where both simple factors of $C(\iota_2)$ act nontrivially.
But it does not admit positive curvature.

If $H=\SU(3)_{(1,2,3)}\cdot\SU(2)_{(5,6)}\cdot\S^1_{(a,a,a,-3a-2c,c,c)}$
and $a= -c$, i.e. $G/H=\SU(6)/\SU(3)_{(1,2,3)}\cdot\SU_{(5,6)}\cdot
\S^1_{(1,1,1,-1,-1,-1)}$, we can choose the involution
$\iota_2=\diag(1,-1,-1,1,-1,-1)\in H$. Then the fixed point set
$$\mathrm{Fix}(\iota_2)=\SU(4)_{(2,3,5,6)}\cdot\SU(2)_{(1,4)}
\cdot\S^1/\SU(2)_{(2,3)}\cdot\S^1\cdot
\S^1\cdot\S^1,$$
where both simple factors in $C(\iota_2)$ acts nontrivially.
But it does not admit positive curvature.

For the remaining cases, we can choose the involution
$\iota_2=\diag(-1,-1,1,1,1,1)\in H$. If $H=\SU(4)_{(1,2,3,4)}\cdot\S^1_{(a,a,a,a,b,-4a-b)}$ and $b=-2a$,
the fixed point set
$$\SU(4)_{(3,4,5,6)}\cdot\SU(2)_{(1,2)}\cdot\S^1_{(-2,-2,1,1,1,1)}/
\SU(2)_{(1,2)}\cdot\SU(2)_{(3,4)}\cdot\S^1_{(1,1,-1,-1,0,0)}\cdot
\S^1_{(1,1,1,1,-2,-2)}$$
is isometric to $\SU(4)/\SU(2)_{(1,2)}\cdot\S^1_{(1,1,-1,-1)}$,
 which does not admit positive
curvature by induction. If $H=\SU(4)_{(1,2,3,4)}\cdot\S^1_{(a,a,a,a,b,-4a-b)}$ and $b\neq-2a$,
the fixed point set
$
\mathrm{Fix}(\iota_2)
=\SU(4)_{(3,4,5,6)}\cdot\S^1_{(-2,-2,1,1,1,1)}/
\SU(2)_{(3,4)}\cdot\S^1_{(1,1,-1,-1,0,0)}\cdot
\S^1_{(a,a,a,a,b,-4a-b)}
$ where the Abelian factor $\S^1_{(-2,-2,1,1,1,1)}$ acts nontrivially.
But it does not admit positive curvature.

If $H=\SU(3)_{(1,2,3)}\cdot\SU(2)_{(5,6)}\cdot\S^1_{(a,a,a,-3a-2c,c,c)}$
and $a\neq -c$,
the fixed point set
$
\mathrm{Fix}(\iota_2)=
\SU(4)_{(3,4,5,6)}\cdot\S^1_{(-2,-2,1,1,1,1)}/
\SU(2)_{(5,6)}\cdot\S^1_{(1,1,-2,0,0,0)}\cdot\S^1_{(a,a,a,-3a-2c,c,c)}
$, where the Abelian factor $\S^1_{(-2,-2,1,1,1,1)}$ acts nontrivially.
But it does not admit positively curved Riemannian homogeneous
metrics.

 \subsection{$G=\SU(n), n\ge 7$}
 Since $T_H$ has  codimension 1 in $T_G$, the 2 dimensional torus in a block $\SU(3)\subset\SU(n)$ has a 1-dimensional intersection with $T_H$. Thus we can assume that the involution
$\iota=(-1,-1,1,\dots,1)$ lies in $T_H$. But then $\Fi=\SU(2)\cdot\SU(n-2)\cdot\S^1/\Hi$ has positive flag curvature. Since $n-2\ge 5$, by induction $\Hi$ must be one of
$$
\Hi=\SU(2)_{(1,2)}\cdot\SU(n-2)_{(3,\cdots,n)},\quad \SU(n-2)_{(3,\cdots,n)}\cdot\S^1, \mbox{ or } \SU(2)_{(1,2)}\cdot\SU(n-3)_{(3,\cdots,n)}\cdot\S^1.
$$
Since $H$ is an equal rank enlargement of $\Hi$,  it is up to conjugation either $\SU(n-1)$, in which case $G/H$ is a sphere, or  one of
\begin{eqnarray*}
H&=&\SU(2)_{(1,2)}\cdot\SU(n-2)_{(3,\cdots,n)},\quad \SU(n-2)_{(3,\cdots,n)}\S^1,\\
& &\SU(2)_{(1,2)}\cdot\SU(n-3)_{(3,\cdots,n-1)}\cdot\S^1, \  \text{ or } \SU(3)_{(1,2,3)}\cdot\SU(n-3)_{(4,\cdots,n)}.
\end{eqnarray*}
Notice all abelian factors $\S^1$ above and below may be quite arbitrary and different from each other.

If $H=\SU(3)_{(1,2,3)}\cdot\SU(n-3)_{(4,\cdots,n)}$,
we can take the involution
$$\iota_2=\diag(1,-1,-1,-1,-1,1,\cdots,1)\in H.$$
Then the fixed point set $\mathrm{Fix}(\iota_2)$ is
$$\SU(4)_{(2,3,4,5)}\cdot\SU(n-4)_{(1,6,\cdots,n)}
\cdot\S^1/
\SU(2)_{(2,3)}\cdot\SU(2)_{(4,5)}\cdot\SU(n-5)_{(6,\cdots,n)}\cdot\S^1\cdot\S^1,$$
 where both simple factors in $C(\iota_2)$ act nontrivially and hence does
  not admit positively curved  metrics.

For the other cases, we can take the
involution $\iota_2=\diag(1,1,-1,-1,1,\cdots,1)$ from $H$. Then the fixed point set
$\mathrm{Fix}(\iota_2)$ is isometric to
$\SU(n-2)\cdot\S^1/\SU(2)\cdot
\SU(n-4)\cdot\S^1$,
$\SU(n-2)\S^1/\SU(n-4)\cdot\S^1\cdot\S^1$ or
$\SU(n-2)\cdot\S^1/\SU(2)_{(1,2)}\cdot\SU(n-5)\cdot\S^1\cdot\S^1$
respectively. But it does not admit positive curvature, both in the case where
 the abelian factor in $C(\iota_2)$ acts trivially or nontrivially

This finishes the proof in the case $G=\SU(n)$.
\end{proof}

\begin{thm} Let $M=G/H$ be an odd dimensional simply connected reversible homogeneous Finsler manifold with positive flag curvature, $G=\Sp(n)$ and $H$ regular in $G$. Then either
$G/H=\Sp(n)/\Sp(n-1)=\S^{4n-1}$, or possibly $\Sp(2)/\diag(z,z),\ \Sp(2)/\diag(z,z^3)$ or  $\Sp(3)/\diag(z,z,r)$, with $z\in\C,\ q\in\QH$.
\end{thm}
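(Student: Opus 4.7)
The plan is to mirror the $\SU(n)$ argument by induction on $n$, with the recurring tool being the selection of an involution $\iota\in H$ whose centralizer $C(\iota)$ fixes a totally geodesic submanifold $\Fi=C(\iota)/\Hi$ that, inheriting a reversible positively curved homogeneous Finsler metric, must appear on the inductively allowed list. In $\Sp(n)$ the diagonal involutions of type $\diag(-I_k,I_{n-k})$ have centralizer $\Sp(k)\cdot\Sp(n-k)$, while a ``central quaternionic'' element such as $jI_n\in\Sp(n)$ has $\Ad$-order two and centralizer $\U(n)$; these two families drive the entire reduction.

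For the base case $n=2$, the rank-one regular subgroups of $\Sp(2)$ are the block $\Sp(1)$, giving the sphere $\Sp(2)/\Sp(1)=\S^7$, and the circles $\U(1)_{(p,q)}=\{\diag(z^p,z^q)\}$ with $\gcd(p,q)=1$; note the non-regular diagonal $\Sp(1)$ is already excluded by the preliminary facts ($\SO(5)/\SO(3)$). I would first rule out the degenerate case $q=0$ via $\iota=\diag(-1,1)\in H$, whose fixed point set is a space of the form $\S^3\times\S^2/\Gamma$ and hence fails positive curvature by Bonnet--Myers. For the remaining $(p,q)$ with $pq\neq 0$ I would invoke the homogeneous flag curvature formula of Proposition~\ref{flagcurv} applied to flags built from commuting root vectors, following the pattern of Examples~1 and~2 in the $\SU(4)$ stage, to exclude all pairs except $(1,1)$ and $(1,3)$; these two remain as exceptions (b) and (a) of Theorem~A.

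For $n=3$ the subgroup $H$ has rank two, and the Borel--Siebenthal list of rank-two regular subgroups gives candidates including the block $\Sp(2)$ (yielding $\S^{11}$), block $\Sp(1)\cdot\Sp(1)$, block $\Sp(1)$ times a circle, and rank-two subtori of $T^3$. In each non-spherical case a non-central diagonal involution $\iota\in H$ has centralizer either $\Sp(1)\cdot\Sp(2)$ or $\U(3)$, so $\Fi$ reduces to a space already classified at the $n=2$ stage or by the $\SU(n)$ theorem. A second involution $\iota_2\in H$, combined with the preliminary fact that a regular $G/H$ with two different simple factors acting nontrivially fails reversible positive Finsler curvature, excludes every candidate except the block and the configuration $\diag(z,z,r)$, which becomes exception (c). For the inductive step $n\geq 4$, the codimension-one subtorus $T_H\subset T_G=T^n$ is cut out by a primitive relation $\sum a_k\theta_k\equiv 0\pmod{2\pi}$; a parity argument on the $a_k$ together with $n\geq 4$ always produces an involution $\iota\in T_H$ of type $\diag(-I_2,I_{n-2})$ up to permutation. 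Then $\Fi=(\Sp(2)\cdot\Sp(n-2))/\Hi$ is positively curved, and since its transitive group has two distinct simple factors and does not appear in the short preliminary list, one factor must act trivially, forcing $\Hi$ to contain an entire $\Sp(2)$ or $\Sp(n-2)$ block. Induction on the surviving factor, together with a second involution chosen in $H$ to further trim the equal-rank extensions of $\Hi$, forces $H$ to be the block $\Sp(n-1)$, so $G/H=\S^{4n-1}$.

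The hardest step will be the base case $n=2$ for the circles $(p,q)=(1,1)$ and $(1,3)$: there every involution $\iota\in H$ produces a fixed point set that is itself a sphere or a space that does admit positive curvature, so the inductive machine terminates without ruling them out. These two spaces are precisely why exceptions (a) and (b) appear in the theorem, and their exclusion or inclusion would require a direct flag curvature computation of the sort promised for the analogous $\SU(4)/\diag(zA,z,\bar z^3)$ case in Section~3. A secondary technical subtlety is the arithmetic of producing an involution $\iota\in T_H$ in the inductive step when the defining coefficients $a_k$ are all odd and no single $\diag(-I_2,I_{n-2})$ type element lies in $T_H$; in such situations one substitutes the central quaternionic element $jI_n$, whose $\Ad$-order is two and whose centralizer is $\U(n)$, thereby reducing to the previously established $\SU(n)$ theorem.
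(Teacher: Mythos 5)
Your overall skeleton (induction on $n$ via $\Ad$-involutions $\iota\in H$ and the totally geodesic fixed point sets $C(\iota)/\Hi$, with the $\Sp(2)$ circles handled by the flag curvature formula of Proposition \ref{flagcurv}) is the same as the paper's, and your base case $n=2$ essentially matches its treatment. The genuine gap is at $n=3$ and at the first inductive step $n=4$, where you assert that fixed-point arguments plus the ``two simple factors'' fact exclude everything except the block and $\diag(z,z,r)$, resp.\ force $H=\Sp(n-1)$. For $H=\diag(z,z^3,r)\subset\Sp(3)$ this cannot work: every subgroup $L\subset H$ with positive-dimensional fixed point set gives either a circle or (for the involutions $\diag(1,1,-1)$, $\diag(-1,-1,1)$, or for $L=\Sp(1)_{(3)}$) the space $\Sp(2)/\diag(z,z^3)$, which is exactly the exception you have just allowed at $n=2$, so the inductive machine is silent. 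The paper excludes $\Sp(3)/\diag(z,z^3,r)$ only by a direct computation with commuting root vectors (its Example~4), and that exclusion is then needed again at $n=4$: the case $H=\Sp(2)_{(1,2)}\cdot\S^1_{(0,0,1,3)}$ is killed precisely because $\Sp(3)/\Sp(1)\cdot\S^1_{(0,1,3)}$ has already been ruled out. Similarly, $H=\Sp(2)_{(1,2)}\cdot\S^1_{(0,0,1,1)}\subset\Sp(4)$ cannot be removed by involutions at all --- its involutive fixed point sets are the allowed exceptions $\Sp(2)/\diag(z,z)$ and $\Sp(3)/\diag(z,z,r)$ --- and the paper instead uses the non-involutive circle $L=\S^1_{(1,1,1,1)}$, whose fixed point set $\SU(4)/\SU(2)_{(1,2)}\cdot\S^1_{(1,1,-1,-1)}$ is excluded by the direct computation in Example~2. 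Without these two curvature computations your induction does not close, and the theorem as stated (only three $\Sp$-exceptions) is not reached; extra spaces such as $\Sp(3)/\diag(z,z^3,r)$ and $\Sp(4)/\Sp(2)\cdot\S^1_{(0,0,1,1)}$ would survive.

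Two smaller points. The parity worry in your inductive step is unnecessary: since $T_H$ has codimension one in $T_G$, it meets the maximal torus of an $\SU(3)\subset\Sp(3)\subset\Sp(n)$ in at least a circle $\S^1_{(a,b,c)}$ with $a+b+c=0$ and $\gcd(a,b,c)=1$; exactly two of $a,b,c$ are odd, so the element at parameter $\pi$ is an involution conjugate to $\diag(-1,-1,1,\dots,1)$, which is the argument the paper uses for all $n\ge 3$. Your fallback via $j\,\mathrm{Id}_n$ would not help in any case, because the fixed point technique requires $L\subset H$ and $j\,\mathrm{Id}_n$ does not in general lie in $H$ (it is not even in the standard maximal torus). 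Finally, in the step $n\ge 5$ one additional possibility $\Hi=\Sp(2)_{(1,2)}\cdot\Sp(1)_{(3)}\cdot\S^1_{(0,0,0,1,1)}$ occurs for $n=5$ and must be treated separately, as the paper does with the involution $\diag(-1,-1,-1,1,1)$.
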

\begin{proof}
The proof is by induction over $\dim G$.

 \subsection{$G=\Sp(2)$}

 The only rank one regular subalgebras are a block $\Sp(1)$, whose quotient is a sphere,  $H=\SU(2)$,  or $H=\diag(z^p,z^q)$ with $z\in\C$.  When $H=\SU(2)$, it was proven in \cite{Finslerodd} that it does not admit positive curvature.

 When $H=\diag(z^p,z^q)$, by suitable conjugations, we can assume $p\geq q\geq 0$, and $\mathrm{gcd}(p,q)=1$. If $q=0$, the fixed point set
 for the unique involution in $H$ is $(\Sp(1)/S^1)\times\Sp(1)$, which does not
 admit positive curvature. In Example 3 we will  show that when
 $(p,q)\neq(1,1)$ or $(1,3)$, it does not admit positive curvature either.

 \subsection{$G=\Sp(3)$} First, observe that for all $n\ge 3$  the 2 dimensional torus in $\SU(3)\subset\Sp(3)\subset \Sp(n)$ has a 1-dimensional intersection with $T_H$, and hence we  can assume that
$$\iota=(-1,-1,1,\dots,1)\in H.$$
   So for $n=3$, we have a fixed point set $\Fi=\Sp(2)_{(1,2)}\cdot\Sp(1)_{(3)}/\Hi$ which has positive flag curvature.
 We can also assume that $H$ is not equal to $\Sp(2)_{(1,2)}$ since otherwise $G/H$ is a sphere.
Thus $\Sp(2)_{(1,2)}$ acts effectively on $\Fi$ and hence by induction, we only need to consider the following cases.

In the first case $H=\Hi=\Sp(1)_{(2)}\cdot\Sp(1)_{(3)}$ or $\Sp(1)_{(1)}\cdot\Sp(1)_{(3)}$. But this is not allowed since $\iota\notin \Hi$.
In the second case, $H=\Hi=\Sp(1)_{(3)}\cdot\S^1_{p,q,0}$ with $(p,q)=(1,1)$
or $(1,3)$. In  Example 4 we will show that in the case of $(p,q)=(1,3)$, $G/H$ does not admit
positive curvature.

 \subsection{$G=\Sp(4)$}
Here we have a fixed point set $\Fi=\Sp(2)\cdot\Sp(2)/\Hi$ which is totally geodesic. By induction we have, up to conjugacy, that $\Hi=\Sp(2)_{(1,2)}\cdot\Sp(1)_{(3)}$, or $\Hi=\Sp(2)_{(1,2)}\cdot\S^1_{(0,0,1,p)}$ with $p=1$ or $3$.
This leaves, up to conjugacy, the following possibilities for $H$, apart from $H=\Sp(3)$ where $G/H$ is a sphere:
$$
H=\Sp(2)_{(1,2)}\cdot\Sp(1)_{(3)},\quad
\Sp(2)_{(1,2)}\cdot\SU(2)_{(3,4)},\quad \Sp(2)_{(1,2)}\cdot\S^1_{(0,0,1,p)}.
$$
In the first two cases, we can choose the involution $\iota_2=\diag(1,-1,-1,-1)\in H$
 with fixed point set $$\mathrm{Fix}(\iota_2)=\Sp(3)_{(2,3,4)}\cdot\Sp(1)_{(1)}/
 \Sp(1)_{(1)}\cdot\Sp(1)_{(2)}\cdot\Sp(1)_{(3)}=
 \Sp(3)_{(2,3,4)}/
 \Sp(1)_{(2)}\cdot\Sp(1)_{(3)}$$
 in the first case and
 $$\mathrm{Fix}(\iota_2)=\Sp(3)_{(2,3,4)}\cdot\Sp(1)_{(1)}
 / \Sp(1)_{(1)}\cdot\Sp(1)_{(2)}\cdot\SU(2)_{(3,4)}
 =\Sp(3)_{(2,3,4)}
 / \Sp(1)_{(2)}\cdot\SU(2)_{(3,4)}$$ in the second case,
 neither one of which admits positive curvature by induction.

When $H=\Sp(2)_{(1,2)}\cdot\S^1_{(0,0,1,3)}$, the fixed point
set for $L=\Sp(1)_{(1)}\subset H$ is $\Fix(L)=\Sp(3)/\Sp(1)_{(1)}\cdot\S^1_{(0,1,3)}$
which does not admit positive curvature by induction.
When $H=\Sp(2)_{(1,2)}\cdot\S^1_{(0,0,1,1)}$, the fixed point
set for $L=\S^1_{(1,1,1,1)}\subset H$ is
$$\Fix(L)=\SU(4)\cdot\S^1_{(1,1,1,1)}/\SU(2)_{(1,2)}\cdot\S^1_{(1,1,0,0)}\S^1_{(0,0,1,1)}
=\SU(4)/\SU(2)_{(1,2)}\cdot\S^1_{(1,1,-1,-1)}$$
which does not admit positive curvature by Example 2.

\bigskip

 \subsection{$G=\Sp(n),n\ge 5$}
As before, we can use the involution $\iota=\diag(-1,-1,1,\cdots,1)\in H$ and conclude that $\Fi=\Sp(2)\cdot\Sp(n-2)/\Hi$ has positive flag curvature. By induction $\Hi$ must be, up to conjugacy, one of
$$
\Sp(1)_{(1)}\cdot\Sp(n-2)_{(3,\cdots,n)},\quad \Sp(2)_{(1,2)}\cdot\Sp(n-3)_{(3,\cdots,n-1)},
\mbox{ and } \S^1_{(1,p,0,\cdots,0)}\cdot\Sp(n-2)_{(3,\cdots,n)}
$$
with $p=1$ or $3$, and in the case of $n=5$ we need to add one more possibility, namely, $$ \Hi=\Sp(2)_{(1,2)}\cdot\Sp(1)_{(3)}\cdot\S^1_{(0,0,0,1,1 )}. $$

We can now assume that $H=\Hi$ since we can exclude the only further equal rank extension $H=\SU(2)_{(1,2)}\cdot\Sp(n-2)_{(3,4,\cdots,n)}$ since then $\Fi=\Sp(2)/\SU(2)$ is not positively curved.

In the first 2 cases we choose $\iota_2=\diag(-1,-1,-1,1,\cdots,1)\in H$. Then
  the fixed point set is isometric to $$\mathrm{Fix}(\iota_2)=\Sp(3)_{(1,2,3)}\cdot\Sp(n-3)_{(4,\cdots,n)}
  /\Sp(1)_{(1)}\cdot\Sp(2)_{(2,3)}\cdot\Sp(n-4)_{( 4,\cdots,n-1 )}$$
 which by induction does not positive curvature.
 In the exceptional case when $n=5$ and $$H= \Sp(2)_{(1,2)}\cdot\Sp(1)_{(3)}\cdot\S^1_{(0,0,0,1,1 )},$$ we can take
 the same involution $\iota_2=\diag(-1,-1,-1,1,1)\in H$ as above. But then
 $$\Fis=(\Sp(3)_{(1,2,3)}/\Sp(2)_{(1,2)}\cdot\Sp(1)_{(3)})\times(\Sp(2)_{(4,5)}
 /\cdot\S^1_{(0,0,0,1,1 )}) $$ does not admit positive curvature either.

In the last case $H=\Hi=\S^1_{(1,p,0,\cdots,0)}\cdot\Sp(n-2)_{(3,4,\cdots,n)}$, we can choose the involution $\iota_2=\diag(1,1,-1,1,\cdots,1)\in H$
 with fixed point set
 \begin{eqnarray*}
 \mathrm{Fix}(\iota_2)&=&\Sp(1)_{(3)}\cdot
 \Sp(n-1)_{(1,2,4,\cdots,n)}/\S^1_{(1,p,0,\cdots,0)}\cdot
 \Sp(1)_{(3)}\Sp(n-3)_{( 4,\cdots,n )}\\
 &=&\Sp(n-1)_{(1,2,4,\cdots,n)}/\S^1_{(1,p,0,\cdots,0)}\cdot\Sp(n-3)_{( 4,\cdots,n )},
 \end{eqnarray*}
 which by induction does not admit positive curvature since $n-1\ge 4$.
 \end{proof}

For the groups $G=\SO(n)$ or $\Spin(n)$ below, we can assume, due to low dimensional isomorphisms, that $n\ge 7$.

\begin{thm} Let $M=G/H$ be an odd dimensional simply connected reversible
homogeneous Finsler manifold with $G=\Spin(n)$ and $H$ regular. If $n\ge 7$, then $G/H$ does not admit positive curvature.
\end{thm}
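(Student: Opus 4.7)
Following the pattern of the two previous theorems in this section, the proof proceeds by induction on $n$ with base cases $n=7,8,9,10$ and a uniform argument for larger $n$. Since $\rank H = \rank G - 1 \geq 2$ for $n \geq 7$, we may conjugate so that $T_H$ lies in the standard maximal torus of $\Spin(n)$ covering the block-diagonal torus of $\SO(n)$, and Borel--Siebenthal shows that the regular corank-one subalgebras $\fh \subset \fso(n)$ are block sums of the form $\bigoplus_i \fso(n_i) \oplus \bigoplus_j \fu(m_j) \oplus \ft$ with $\sum n_i + \sum 2m_j + \dim\ft = n$. Any such $H$ contains diagonal involutions $\iota = \diag(\epsilon_1,\ldots,\epsilon_n) \in \SO(n)$ with $\epsilon_i = \pm 1$.

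The inductive step rests on choosing an involution $\iota \in H$ with centralizer $C(\iota) = \S(\O(2)\O(n-2))$. The fixed point set $\Fi = C(\iota)/\Hi$ is totally geodesic with positive flag curvature. Since $C(\iota)$ is not semisimple, the third bullet of Section 2 forces $\Fi$ to be a homogeneous sphere, $\S^{4k-1}$, or a $\U(3)$-homogeneous Aloff--Wallach space. By induction applied to the simple factor $\SO(n-2)$ of $C(\iota)$, this is compatible only with $H$ being conjugate to the block $\SO(n-2)$, to $\SO(n-3)\cdot\SO(2)$, or to a short list of candidates containing $\U(k)$-blocks. For each such candidate we exhibit a second involution $\iota_2 \in H$, typically $\iota_2 = \diag(1,1,-1,-1,1,\ldots,1)$ lying in an $\SO(n-2)$-block of $H$, whose centralizer is $\S(\O(n-2)\O(2))$ and whose $\His$ is a corresponding block subgroup; after cancellation of the common $\O$-factor, $\Fis$ reduces to a Stiefel manifold $\SO(n-2)/\SO(n-4)$ or a similar space excluded by induction on $\SO(n-2)$ or by the fifth bullet of Section 2.

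The base cases $n=7,8,9,10$ reduce to a finite enumeration: Borel--Siebenthal gives only a handful of regular corank-one subalgebras $\fh \subset \fso(n)$ for these small values (including $\fso(5)\oplus\fso(2)$, $\fso(3)\oplus\fso(3)\oplus\fso(2)$, $\fu(3)$, $\fu(2)\oplus\fso(3)$, and a few variants with $\fu$-blocks and torus slopes), and for each one finds a diagonal involution $\iota \in H$ whose fixed point set is either a symmetric space of rank $\geq 2$, one of the forbidden quotients $\SO(5)/\SO(3)$ or $\S^3 \times \S^3/S^1_{p,q}$ from the fifth bullet, or a product violating positive curvature via Bonnet--Myers. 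The main obstacle will be handling the cases where $H$ contains a $\U(k)$-block with $k \geq 2$: the involutions from $\U(k)$ preserve the complex structure and sit inside an $\SO(2k)$-block, making the identification of $\Hi \subset C(\iota)$ more delicate; this requires tracking the standard embedding $\U(k) \hookrightarrow \SO(2k)$ and checking that the parity constraint imposed by the $\S(\O\cdots\O)$ normalizer still allows at least one of $\iota$ or $\iota_2$ to lie in $H$ for every block decomposition that survives the first reduction.
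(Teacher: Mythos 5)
The inductive step as you set it up fails at the very first move: you assume that $H$ contains an involution $\iota$ with centralizer $\S(\O(2)\O(n-2))$, i.e.\ with eigenvalue $-1$ of real multiplicity two, but a regular corank-one subgroup need not contain such an element. For instance $\SU(3)\subset\SO(6)\subset\SO(7)$ is regular of corank one, and every involution in it has $-1$ of real multiplicity four, so your chosen $\iota$ simply does not exist there. This is precisely why the paper argues differently: since $T_H$ has codimension one in $T_G$, it meets the two-torus of $\SU(3)\subset\SO(6)$ in a one-dimensional subgroup, and any involution in that intersection automatically has $-1$ of real multiplicity four; the entire induction is then run with $\iota=\diag(-1,-1,-1,-1,1,\dots,1)$ and $C(\iota)=\SO(4)\cdot\SO(n-4)$, not with $\S(\O(2)\O(n-2))$. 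Relatedly, your appeal to the ``non-semisimple'' bullet for $C(\iota)$ is only legitimate when the abelian factor acts nontrivially on the fixed point set; if the $\SO(2)$-factor lies in $H^{\iota}$ the effectively acting group is semisimple and that bullet yields nothing.

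More seriously, your list of surviving isotropy groups (block $\SO(n-2)$, $\SO(n-3)\cdot\SO(2)$, or candidates with $\U(k)$-blocks) omits the family that is the actual heart of the matter, namely $H=\SU(2)_\pm\cdot\SO(n-4)$ with $\SU(2)_\pm$ a normal factor of a block $\SO(4)$, and your claim that every resulting fixed point set is excluded ``by induction or by the fifth bullet'' has no mechanism for the terminal case this family produces: for $n=7$ (and again for $n=8$, and by descent from all $n\ge 9$ through $\SO(n-2)/\SU(2)_\pm\cdot\SO(n-6)$) one is left with $\SO(6)/\SU(2)_\pm\cdot\SO(2)$, which under the double cover $\SU(4)\to\SO(6)$ is $\SU(4)/\SU(2)_{(1,2)}\cdot\S^1_{(1,1,-1,-1)}$. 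This space is not a symmetric space of rank $\ge 2$, is not on the fifth-bullet list, and is not reachable by your $\SO$-induction; in the paper it is excluded only by the explicit reversible-Finsler zero-curvature-plane computation of Example 2 in Section 3. Without identifying this family and invoking Example 2 (or some substitute that genuinely uses reversibility), the argument cannot close; since your outline also defers the $\U(k)$-block bookkeeping as an unresolved ``main obstacle,'' what you have is a genuine gap rather than a complete alternative proof.
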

\begin{proof}
As explained before, we can assume that $G=SO(n)$, and we will also assume $n\ge 7$.
The 2 dimensional torus in $\SU(3)\subset\SO(6)$  contained in the upper the $6\times 6$-block has a 1-dimensional intersection with $T_H$ and any involution
$\iota$ in this intersection has the eigenvalue $-1$ with  real multiplicity four. Hence without
loss of generality we can choose
$\iota=\diag(-1,-1,-1,-1,1,\cdots,1)\in H$.

 \subsection{$G=\SO(7)$}
The fixed point set  $\Fi=\SO(4)\cdot\SO(3)/\Hi$ has positive curvature. Only one simple factor in $C(\iota)=\SO(4)\cdot\SO(3)$ can act nontrivially on $\Fi$, and $\Hi$ is regular in $C(\iota)$. Hence we are left with 2 cases: $\Hi=\SO(4)$ or
$\SU(2)_\pm\cdot\SO(3)_{(5,6,7)}$ where $\SU(2)_\pm$ is one of the two nontrivial proper normal subgroups of $\SO(4)_{(1,2,3,4)}$. This leaves only the following possibilities: $H=\SO(4),\quad \SO(5),\quad \SU(2)_\pm\cdot\SO(3)$.

If $H=\SO(4)_{(1,2,3,4)}$, we consider the totally geodesic fixed point set for $L=\SO(3)_{(1,2,3)}\subset\SO(4)_{(1,2,3,4)}$ is
$$\mathrm{Fix}(L)=\SO(3)\cdot\SO(4)/\SO(3)=\SO(4),$$ which does not admit positive curvature.

If $H=\SO(5)_{(1,2,3,4,5)}$, we choose the involution $\iota_2=\diag(-1,-1,1,\cdots,1)\in H$
 with fixed point set $$\mathrm{Fix}(\iota_2)=\SO(2)\cdot\SO(5)
 /\SO(2)\cdot\SO(3)
 =\SO(5)/\SO(3)$$ which does not admit positive curvature.

If $H=\SU(2)_\pm\cdot\SO(3)_{(5,6,7)}$, we take the involution $\iota_2=\diag(-1,\cdots,-1,1)\in H$
 with fixed point set $\mathrm{Fix}(\iota_2)=\SO(6)/\SU(2)_\pm\cdot\SO(2)$. Observe that under the 2-fold cover $\SU(4)\to\SO(6)$ the subgroup $\S(\U(2)\cdot\U(2))\subset\SU(4)$ goes into $\SO(4)\cdot\SO(2)\subset \SO(6)$. Thus $\SU(2)_\pm\cdot\SO(2)$, when lifted to $\SU(4)$,
 becomes $\SU(2)_{(1,2)}\cdot\S^1_{(1,1,-1,-1 )}$. It implies that
$$\mathrm{Fix}(\iota_2)=\SO(6)/\SU(2)_{\pm}\cdot\SO(2)_{(5,6)}=
\SU(4)/\SU(2)_{(1,2)}\cdot\S^1_{(1,1,-1,-1 )}$$ which does not admit positive curvature by Example 2.

 \subsection{$G=\SO(8)$}
 The fixed point set  $\Fi=\SO(4)\cdot\SO(4)/\Hi$ has positive curvature, which implies that, up to conjugacy, $\Hi=\SU(2)_\pm\SO(4)_{(5,6,7,8)}=H$. Taking the
 involution
 $$\iota_2=\diag(1,1,1,1,1,-1,-1)\in H$$ we have the fixed point set
$$\Fis=\SO(6)\cdot\SO(2)/\SU(2)_\pm\cdot\SO(2)\cdot\SO(2)=\SO(6)/\SU(2)_\pm\cdot
\SO(2)_{(5,6)},$$ which we  saw does not admit positive curvature.

 \subsection{$G=\SO(n), n\ge 9$} Here $\Fi=\SO(4)\cdot\SO(n-4)/\Hi$ has positive curvature. If $H=\Hi=\SU(2)_\pm\cdot\SO(n-4)_{(5,\cdots,n)}$, we can choose the same involution $\iota_2=\diag(1,\cdots,1,-1,-1)\in H$ as in the previous case with $$\Fis=\SO(n-2)\cdot\SO(2)/\SU(2)_\pm\cdot\SO(n-6)\cdot\SO(2)
 =\SO(n-2)/\SU(2)_\pm\cdot\SO(n-6)$$ which by induction does not admit positive curvature since $n-2\ge 7$.

Now we consider the other cases where $\SO(n-4)_{(5,\cdots,n)}$ acts nontrivially on $\Fi$.
By induction, this may only happen when $n=9$ or $10$, where we have several possibilities for a stabalizer group $\bar H\subset\SO(n-4)_{(5,\cdots,n)}$ with $H=\SO(4)_{(1,2,3,4)}\cdot\bar H$. Again, we can take the involution $\iota_2=\diag(-1,-1,1,\cdots,1)\in H$ with $$\Fis=\SO(2)\cdot\SO(n-2)/\SO(2)\cdot\SO(2)\cdot\bar H= \SO(n-2)/\SO(2)\cdot\bar H $$ which does not admit positive curvature by induction.
\end{proof}

In the case of exceptional simple Lie groups $G$, we do not need to assume that $H$ is regular, or that $G/H$ is odd dimensional.

\begin{thm} Let $M=G/H$ be a simply connected  homogeneous Finsler manifold with positive flag curvature, with $G$ a compact exceptional simple Lie group. Then the only possible choices for $G/H$ are the Cayley plane $F_4/\Spin(9)$, the flag manifold $F_4/\Spin(8)$, or possibly
$\G_2/\SU(2)_\pm$, where $\SU(2)_\pm$ are the two normal subgroup $\SU(2)_\pm\subset\SO(4)\subset \G_2$.  In  the case of the normal subgroup corresponding to the short root, $G/H$ does not admit a reversible Finsler metric with positive flag curvature.
\end{thm}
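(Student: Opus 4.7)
The plan is to continue the induction on $\dim G$ of the previous sections, now treating the five exceptional simple Lie groups $G\in\{\G_2,F_4,E_6,E_7,E_8\}$ in order of increasing dimension. The inductive hypothesis at each step is that the theorem is already known for all compact Lie groups of strictly smaller dimension, together with the classifications established in the previous sections. For each candidate compact connected $H\subset G$ we pick an involution $\iota\in H$, read off its centralizer $C(\iota)\subset G$ from the Borel--Siebenthal classification of involutions in $G$, and exploit that $\Fi=C(\iota)/\Hi$ is totally geodesic and so inherits positive flag curvature. Since $\dim C(\iota)<\dim G$, induction constrains $\Fi$ to the allowed list; this pins down $\Hi$ and hence $H$ (as an equal-rank extension) to a short menu, which a second involution $\iota_2\in H$ then eliminates. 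We do not need to assume $H$ regular separately, since by the facts in Section~2 a non-regular $H$ in exceptional simple $G$ would force $G/H$ onto the Berger--Wallach list of positively curved Riemannian homogeneous spaces, which for exceptional $G$ is exactly $F_4/\Spin(9)$, $F_4/\Spin(8)$, and $\G_2/\SU(3)=\S^6$, all subsumed by the target statement.

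For $G=\G_2$ (rank two) the only conjugacy class of involutions has centralizer $\SO(4)=\SU(2)_+\cdot\SU(2)_-$. Borel--Siebenthal leaves a short list of regular $H$: $\SU(3)$ (giving $\S^6$), $\SO(4)$ (giving the $\G_2$ Wolf space, excluded as a non-positively-curved symmetric space), the two isolated $\SU(2)_\pm$, and various rank-two mixed enlargements, each ruled out because $\Fi$ degenerates either to a product of spheres or to a higher-rank symmetric space. For $G=F_4$ one has $C(\iota)\in\{\Spin(9),\Sp(3)\cdot\Sp(1)\}$; inductive reduction on these factors together with a second involution leaves only $H=\Spin(9)$ and $H=\Spin(8)$, giving the Cayley plane and the Wallach flag manifold. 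For $G\in\{E_6,E_7,E_8\}$ the centralizers of involutions lie on the standard Borel--Siebenthal list and are of strictly smaller dimension than $G$; in each case applying the induction hypothesis and the second-involution elimination shows that no equal-rank extension $H\supset\Hi$ survives. The group-theoretic reduction therefore leaves only $F_4/\Spin(9)$, $F_4/\Spin(8)$, $\G_2/\SU(2)_+$, and $\G_2/\SU(2)_-$.

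The last step is the separation of the two $\G_2/\SU(2)_\pm$. Both $\SU(2)_\pm$ are regular rank-one subgroups of $\G_2$, but the $\Ad_H$-isotypic decomposition of $\fm$ differs between them: the short-root case carries a distinguished $\Ad_H$-invariant direction that is absent in the long-root case. My plan is to feed a flag supported on this direction into the homogeneous flag curvature formula (Proposition~\ref{flagcurv}); reversibility of $F$ enforces $F(v)=F(-v)$, so the fundamental tensor of $F$ at this direction is even, and combining with $\Ad_H$-invariance collapses the tensorial data enough to reduce the relevant flag curvature to a quantity whose non-positivity on some explicit plane can be checked directly, giving the required contradiction in the short-root case. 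The analogous computation for the long-root case fails to close, which is why $\G_2/\SU(2)_+$ must remain as a ``possibly'' in the statement.

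The main obstacle is precisely this last step. The group-theoretic induction dispatches $E_6, E_7, E_8$ and the bulk of $F_4$ and $\G_2$ by uniform case analysis, but it cannot distinguish the two $\SU(2)_\pm$ subgroups of $\G_2$: both have the same rank, same dimension, and the same inductive signature under all Borel--Siebenthal involutions. Only the reversibility hypothesis, inserted into the flag curvature formula through the evenness of the fundamental tensor, breaks the symmetry and excludes the short-root case; its failure to do the same in the long-root case is the reason the theorem leaves $\G_2/\SU(2)_+$ open.
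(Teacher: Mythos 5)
Your reduction scheme---induction on $\dim G$ through involutions $\iota\in H$ and the totally geodesic fixed point sets $C(\iota)/H^{\iota}$, with higher rank symmetric spaces and products excluded---is the same skeleton the paper uses (it imports it essentially wholesale from \cite{WZ}, observing that exactly these ingredients survive in the Finsler setting without reversibility). The genuine gap is in the one step that is actually new in this theorem: the exclusion of the short-root $\SU(2)\subset\SO(4)\subset \G_2$ for reversible metrics. Your argument for it rests on a false structural claim. There is no ``distinguished $\Ad_H$-invariant direction present in the short-root case and absent in the long-root case'': in both cases the centralizer of $\fh$ in $\fm$ is the complementary $\fsu(2)_{\mp}$, a three-dimensional trivial $\Ad_H$-summand, and the remaining part of $\fm$ is eight-dimensional in both cases. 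The asymmetry the paper actually exploits (its Example 5) is how the circle $T\cap H$ acts on the four root planes outside $\fm_0$: for the short root $\gamma_1$ the rotation speeds on $\fg_{\pm\gamma_2},\fg_{\pm\gamma_5}$ and on $\fg_{\pm\gamma_3},\fg_{\pm\gamma_4}$ are $3$ and $1$, so some $g\in T\cap H$ acts as $\Id$ on $\fm_0$, as $-\Id$ on $\fm_1=\fg_{\pm\gamma_2}+\fg_{\pm\gamma_5}$, and as a rotation by $\pi/3$ on $\fm_2=\fg_{\pm\gamma_3}+\fg_{\pm\gamma_4}$; reversibility (through $g^F_{-u}=g^F_u$) makes this splitting $g_u^F$-orthogonal for $u\in\fm_1$, and after choosing $u\in\fm_1$ of maximal biinvariant norm among $F$-unit vectors (moved into $\fg_{\pm\gamma_2}$ by $\Ad(\exp\fm_0)$) and any $v\in\fg_{\pm\gamma_4}$, the bracket relations $[u,\fm]_\fm\subset\fm_0+\fm_1$ and $[v,\fm]_\fm\subset\fm_0+\fm_2$ force $U(u,v)=0$, i.e.\ a zero-curvature flag by Proposition~\ref{flagcurv}. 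For the long root all four nontrivial root planes have the same $T\cap H$-speed, so no such $g$ exists---that, and not an invariant direction, is why the construction closes only in the short-root case. As written, your plan cannot separate $\SU(2)_+$ from $\SU(2)_-$, which you yourself identify as the crux.

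Two smaller points. Your uniform sweep over $E_6,E_7,E_8$ conceals the one place the paper says the argument of \cite{WZ} must be modified, namely $H=E_7\subset E_8$: there $E_7$ is itself the candidate, not an equal-rank extension to be killed afterwards, and the paper eliminates it by choosing $\iota\in E_7$ with $H^{\iota}=E_6\cdot\S^1$ and using that every involution in $E_8$ has centralizer $E_7\cdot\SU(2)$ or $\SO'(16)$, so that $C(\iota)/H^{\iota}$ fails positive curvature by induction; a correct write-up must exhibit such a choice rather than assert that the case analysis closes. Also, your list of corank-one regular candidates in $\G_2$ omits circle subgroups $H=\S^1\subset T$ (tori are regular), which must likewise be dispatched through their fixed point sets before the dichotomy $\SU(2)_\pm$ is reached.
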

\begin{proof}
Most arguments in \cite{WZ}  only uses the fixed point set technique, the fact that a symmetric space of rank $>1$ does not admit positive curvature,
and that a product homogeneous space $G_1/H_1 \times G_2/H_2$ cannot admit positive curvature either. In \cite{Finslerodd} it was shown that these also hold in the Finsler setting, without assuming reversability.  This in particular finishes the cases $G=F_4, E_6$ and $E_7$.

In the case of $G=G_2$, the argument proves that  $H=\SU(2)_\pm\subset\SO(4)\subset G_2$ where the roots of $\mathfrak{h}$ can be either long or short.  We will show in Example 5 that the roots of $\mathfrak{h}$ may only be the long ones.

In the case of $G=E_8$, the argument needs to be changed in the case of $H=E_7$. Choose an involution $\iota\in E_7$ such that $\Hi=E_6\cdot\S^1$.  But the centralizers of involutions in $E_8$ are one of  $C(\iota)=E_7\cdot\SU(2)$ or $\SO'(16)$ and hence the quotient $C(\iota)/\Hi$ does not admit positive curvature by induction.

\end{proof}

\bigskip

\section{Examples}

In this section we will discuss the following homogeneous spaces, which were left over in the inductive proof in Section 2, using the methods in \cite{Finslerodd}.

Let $G/H$ be a homogeneous spaces and $\fg=\fh+\fm$  an $\mathrm{Ad}(H)$-invariant
decomposition. The Finsler metric invariant under $G$ is identified with a norm $F$ defined on $\fm$. For any vector $u\in\fm$, we can define a Riemannian inner product $\langle w_1,w_2\rangle_u^F=(D^2F)_u(w_1,w_2)$ as the Hessian of $F$ at the nonzero tangent vector $u$. Sometimes we simply denote it as $g_u^F$. The reversibility of the Finsler metric implies $\langle x,y\rangle_u^F=\langle x,y\rangle_{-u}^F$.

The following proposition from \cite{Finslereven} provides a simple homogeneous flag curvature
formula. It can be either proved by Finslerian submersion technique, or deduced from a more general and complicated homogeneous flag curvature formula of L. Huang \cite{Huang2013}.

\begin{prop} \label{flagcurv}
Let $(G/H,F)$ be a  homogeneous Finsler manifold, and $\fg=\fh+\fm$ be an $\mathrm{Ad}(H)$-invariant
decomposition for $G/H$. Then for any linearly independent vectors
 $u,v\in \mathfrak{m}$ with $[u,v]=0$ and $\langle[u,\fm]_\fm,u\rangle_u^F=0$, the flag curvature for $u$ and $u\wedge v$ is given by:
\begin{equation*}
K^F(u,u\wedge v)=\frac{\langle U(u,v),U(u,v)\rangle_u^F}
{\langle u,u\rangle_u^F \langle v,v\rangle_u^F-
{\langle u,v\rangle_u^F}\langle u,v\rangle_u^F},
\end{equation*}
where $U\colon \mathfrak{m}\times \mathfrak{m}\to \mathfrak{m}$ is defined by
\begin{equation*}
\langle U(u,v),w\rangle_u^F=\frac{1}{2}(\langle[w,u]_\mathfrak{m},v\rangle_u^F
+\langle[w,v]_\mathfrak{m},u\rangle_u^F), \mbox{ for any }w\in\mathfrak{m}.
\end{equation*}
\end{prop}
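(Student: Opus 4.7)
The plan is to reduce the formula to the general homogeneous flag-curvature formula of L. Huang \cite{Huang2013} (or, equivalently, to perform an O'Neill-type computation via the Finslerian submersion $\pi\colon G\to G/H$, where $G$ carries a left-invariant, $\Ad_H$-biinvariant Finsler structure restricting to $F$ on $\fm$), and then to let the two hypotheses collapse the general expression down to the stated one.

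First I would record the Finslerian Kostant criterion: the condition $\langle[u,\fm]_\fm,u\rangle_u^F=0$ is equivalent to the orbit curve $\gamma(t)=\exp(tu)\cdot o$ being a geodesic of $(G/H,F)$ with $\dot\gamma(0)=u$. This is a standard fact for homogeneous Finsler spaces and lets us compute the flag curvature along $\gamma$ with flagpole $u$ and transverse vector $v\in\fm$.

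Second, because $[u,v]=0$ the one-parameter subgroups generated by $u$ and $v$ commute, so the two-parameter map $(s,t)\mapsto \exp(sv)\exp(tu)\cdot o = \exp(tu)\exp(sv)\cdot o$ is a geodesic variation of $\gamma$ by the isometries $\exp(sv)$. Hence $V(t)=\partial_s|_{s=0}\exp(sv)\cdot\gamma(t)$ is a Jacobi field along $\gamma$ with $V(0)=v$. Finsler second-variation theory then expresses $K^F(u,u\wedge v)$ in terms of $V$ and its covariant derivative along $\gamma$ with reference vector $u$. I would next compute $D_{\dot\gamma}V|_{t=0}$ via the Koszul identity for the homogeneous Chern connection in the $g_u^F$-setting. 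Both hypotheses come in here: $[u,v]=0$ kills the skew $\frac{1}{2}[u,v]_\fm$ contribution, and $\langle[u,\fm]_\fm,u\rangle_u^F=0$ makes the reference direction genuinely parallel, leaving exactly the symmetric tensor $U(u,v)$ defined in the statement. Substituting into the Jacobi equation and dividing by the $g_u^F$-area squared of the plane $u\wedge v$ then produces the displayed formula.

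The main obstacle is the Finsler-connection bookkeeping: $g_u^F$ depends on the flagpole $u$ and the Chern connection varies with the reference vector, so one must verify that the Koszul identity and the Jacobi equation are both being evaluated consistently with the same reference direction, and that the two hypotheses are precisely what is needed to cancel the non-Riemannian cross terms that would otherwise appear. This is exactly the simplification already built into Huang's general formula, which is why either invoking \cite{Huang2013} or working through the Finslerian submersion from $G$ provides the cleanest route.
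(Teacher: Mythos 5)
Your approach is essentially the paper's: the paper does not prove Proposition \ref{flagcurv} itself but quotes it from \cite{Finslereven}, remarking that it can be established either by the Finslerian submersion technique or by specializing the general homogeneous flag-curvature formula of Huang \cite{Huang2013} --- precisely the two routes you propose. Your additional sketch (the geodesic-vector criterion for $u$, the commuting Killing field along $\exp(tu)\cdot o$ viewed as a Jacobi field, and the cancellation of the non-Riemannian correction terms forced by $\langle[u,\fm]_\fm,u\rangle_u^F=0$) is consistent with how those two hypotheses collapse the general formula to the stated one.
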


\bigskip

We will use this result to produce 0 curvature planes by choosing linearly independent commuting vectors $u$ and $v$ with
\begin{equation}\label{zero}
\langle[u,\fm],u\rangle_u^F=\langle[u,\fm],v\rangle_u^F=
\langle[v,\fm],u\rangle_u^F=0
\end{equation}
and hence $U(u,v)=0$.
The arguments are similar to the ones in \cite{Finslerodd}.

\subsection{Example 1}
$G=\SU(4)$ and $H=\SU(2)_{(1,2)}\cdot\S^1_{( p+q, p+q,-2p,-2q)}$ with $\mathrm{gcd}(p,q)=1$,
$p+q>0$, $p\ge q$, $(p,q)\ne (1,1)$, $(1,-1)$ or $(3,-1)$.

We can choose any nonzero vectors $u\in\fg_{\pm(e_1-e_3)}$ and
$v\in\fg_{\pm(e_2-e_4)}$, as two linearly independent commuting vectors.

To prove (\ref{zero}), we need the following $g_u^F$-orthogonality and bracket properties.

Denote $\fm_0=\ft\cap+\fg_{\pm(e_1-e_3)}$, $\fm_1=\fg_{\pm(e_2-e_4)}$, and
$\fm_2$ is the sum of other root planes in $\fm$. Then $\fm=\fm_0+\fm_1+\fm_2$ provide a decomposition of $\fm$ preserved by the $\mathrm{Ad}$-actions of
$L=S^1_{(-p,2p+q,-p,-q)}\in T_H$. The conditions on $p$ and $q$ implies $\fm_0$ is the $\mathrm{Ad}(L)$-invariant subspace, and $\fm_1$ corresponds to
a unique irreducible representation of $L$ in $\fm$. Because $L$ preserves
$u$ as well as $F$, thus it preserves $g_u^F$ as well. So the above decomposition is $g_u^F$-orthogonal.

When restricted to $\fg_{\pm(e_1-e_3)}$, the $\mathrm{Ad}(T_H)$-invariance of $F$ implies that it coincide with the biinvariant inner product up to a scalar. So we have $$\langle u,[u,\ft\cap\fm]_\fm\rangle_u^F=
\langle u,[u,\ft]\rangle_u^F=0.$$

We can find $g\in \S^1_{(1,-1,0,0)}\subset T_H$ such that
$\mathrm{Ad}(g)|_{\fg_{\pm(e_1-e_3)}}=-\mathrm{Id}$. Obviously it preserves
$\ft\cap\fm$. So for any $w\in\ft\cap\fm$, we
have
$$\langle u,w\rangle_u^F=\langle\mathrm{Ad}(g)u,\mathrm{Ad}(g)w\rangle_{\mathrm{Ad}(g)u}
=\langle -u,w\rangle_{-u}^F=-\langle u,w\rangle_u^F,$$
i.e. $\langle u,\ft\cap\fm\rangle_u^F=0$.

 If we denote by $\fm'=\ft\cap\fm+[u,\ft]+\fm_2$, then the above argument shows that $\fm'$ is $g_u^F$-orthogonal to both $u$ and $v$. A calculation shows that $[u,\fm']_\fm\subset\fm'$
and $[v,\fm']_\fm\subset\fm'$. Then (\ref{zero}) follows immediately, and
$G/H$ does not admit positive curvature by Proposition \ref{flagcurv}.

\bigskip

\subsection{Example 2}
$G=\SU(4)$ and $H=\SU(2)_{(1,2)}\cdot\S^1_{(1,1,-1,-1)}$.

Denote by
\begin{eqnarray*}
\fm_0&=&\ft\cap\fm+\fg_{\pm(e_3-e_4)},\\
\fm_1&=&\fg_{\pm(e_1-e_3)}+\fg_{\pm(e_1-e_4)},\mbox{ and}\\
\fm_2&=&\fg_{\pm(e_2-e_3)}+\fg_{\pm(e_2-e_4)},
\end{eqnarray*}
Notice $\fm_0$ is the centralizer of
$\fh$ in $\fm$, and it is the Lie algebra of $\SU(2)_{(3,4)}$.
By a suitable $\mathrm{Ad}(\SU(2)_{(3,4)})$-action on the metric, we can assume there is a $F$-unit vector $u\in \fg_{\pm(e_1-e_3)}\subset\fm_1$,
such that $||u||_{\mathrm{bi}}$ reaches the maximum among all $F$-unit
vectors in $\fm_1$. Choose any nonzero vector $v\in\fg_{\pm(e_2-e_4)}
\subset\fm_2$, then we get the linearly independent commuting pair $u$ and $v$,
for which we will verify (\ref{zero}).

By the reversibility of $F$, we can similarly prove
$\langle u,\fm_0\rangle_u^F=0$. The biinvariant orthogonal complement of
$\mathbb{R}u$ in $\fm_1$ is $[\ft,u]+\fg_{\pm(e_1-e_4)}$. Our special choice
for $u$ implies $\langle u,[\ft,u]+\fg_{\pm(e_1-e_4)}\rangle_u^F=0$.
The $\mathrm{Ad}(\S^1_{0,-2,1,1})\subset\mathrm{Ad}(T_H)$-actions preserve
$u$ as well as $F$, and thus $g_u^F$. According to this representation,
$\mathfrak{m}$ is $g_u^F$-orthogonally decomposed as the sum of $\fm_0+\fm_1$
and $\fm_2$. Thus
$$\fm'=\ft\cap\fm+\fg_{\pm(e_3-e_4)}+[\ft,u]+\fg_{\pm(e_1-e_4)}$$
is $g_u^F$-orthogonal to both $u$ and $v$.
A direct calculation shows that
$[u,\fm]_\fm\subset\fm'$ and $[v,\fm]_\fm\subset\fm'$, and hence
(\ref{zero}) holds. Thus $G/H$ does not admit positive curvature.

\bigskip

\subsection{Example 3}
$G=\Sp(2)$ and $H=\S^1_{(p,q)}=\diag(z^p,z^q)$ with $\mathrm{gcd}(p,q)=1$, $p>q>0$ and $(p,q)\neq(3,1)$.

We have a root plane decomposition
\begin{eqnarray}\label{root-plane-decomp-1}
\fm=\ft\cap\fm+\fg_{\pm2e_1}+\fg_{\pm2e_2}+
\fg_{\pm(e_1+e_2)}+\fg_{\pm(e_1-e_2)}.
\end{eqnarray}
Choose the linearly independent commuting pair $u$ and $v$ from $\fg_{\pm 2e_1}$ and $\fg_{\pm 2e_2}$ respectively.
We claim they satisfy \eqref{zero}.

We can similarly get $\langle u,[u,\ft]\rangle_u^F=0$, and by the reversibility of $F$, $\langle u,\ft\cap\fm\rangle_u^F=0$.
Furthermore, we can prove the root plane decomposition (\ref{root-plane-decomp-1}) is $g_u^F$-orthogonal as well. To see this, consider $g=\diag(z^p,z^q)\in H$, then
  $\Ad(g)$ acts with speed $2p,2 q, p+q, p-q$ on the respective root spaces.
  The conditions for $p$ and $q$ implies $\mathrm{Ad}(H)$ rotates each root plane with a different speed, and as $\Id$ on $\ft\cap\fm$.
  We now choose $g=\diag(z^p,z^q)\in H$ such that $\mathrm{Ad}(g)u=-u$, i.e. $z^{2p}=-1$. Since the Finsler metric is reversible, $\Ad(g)$ preserves $g_u^F$. Thus the root plane decomposition (\ref{root-plane-decomp-1})
  is $g_u^F$-orthogonal, which implies that \eqref{zero} holds.

\bigskip

\subsection{Example 4}
$G=\Sp(3)$ and $H=\Sp(1)_{(3)}\cdot\S^1_{(1,3,0)}=\diag(z,z^3,r)$ with $z\in\C$ and $r\in\Sp(1)$. The argument is very similar to that for Example 1.

Here $\fm$ contains, besides $\ft\cap\fm$, the root planes for $\pm 2e_1,\ \pm 2e_2,\ \pm(e_1\pm e_2),
\ \pm(e_1\pm e_3),\ \pm(e_2\pm e_3) $. Let $g=\diag(z,z^3,z^4)\in\S^1_{(1,3,4)}\subset H$. Then $\Ad(g)$ acts with speed $2,6,4,2,5,3,7,1$   on the root spaces corresponding to the roots $\pm 2e_1$, $\pm2e_2$, $\pm (e_1+e_2)$, $\pm( e_1-e_2)$,
$\pm( e_1+e_3)$, $\pm (e_1-e_3)$, $\pm (e_2+e_3)$, $\pm( e_2-e_3)$ respectively. Choose 2 commuting vectors $u\in \fg_{2e_2}$ and $v\in \fg_{e_1-e_3 }$ and $z$ such that $z^6=-1$. Then, since $F$ is reversible, $\Ad(g)$ preserves $g_u^F$. Furthermore, $\ft\cap\fm$ is orthogonal to all root spaces, and  since $z$ is a $12$th root of unity,   all root spaces are  orthogonal to each other, except that  $\ml \fg_{\pm 2e_1},\fg_{\pm (e_1-e_2)}\mr_u^F$ and $\ml \fg_{\pm( e_1+e_3)},\fg_{\pm( e_2+e_3)}\mr_u^F$ could be nonzero. Similarly as before, we can get $\langle[u,\ft\cap\fm]_\fm,u\rangle_u^F=0$. This is already sufficient for us
to verify \eqref{zero}.

\bigskip

\subsection{Example 5}
 $G=G_2$ and $H=\SU(2)$ corresponding to a short root.

Let $\pm\gamma_1$ be a pair of short roots of $\mathfrak{g}$ and  $\pm\gamma_2$  a pair of long roots  such that the angle between $\gamma_1$ and $\gamma_2$ is
$3\pi/4$. The other roots of $\mathfrak{g}$ are short roots $\pm\gamma_3=\pm(\gamma_1+\gamma_2)$ and $\pm\gamma_4=\pm(2\gamma_1+\gamma_2)$, and long roots
$\pm\gamma_5=\pm(3\gamma_1+\gamma_2)$ and $\pm\gamma_6=\pm(3\gamma_1+2\gamma_2)$.

Denote  by
$$\mathfrak{m}_0=\mathfrak{t}\cap\mathfrak{m}+\mathfrak{g}_{\pm\gamma_6},\quad
\mathfrak{m}_1=\mathfrak{g}_{\pm\gamma_2}+\mathfrak{g}_{\pm\gamma_5},\quad \mathfrak{m}_2=\mathfrak{g}_{\pm\gamma_3}+\mathfrak{g}_{\pm\gamma_4}$$

Then we can find a group element $g\in T\cap H$ and suitable bi-invariant orthonormal bases for each root plane such that
$$
\mathrm{Ad}(g)|_{\mathfrak{m}_0}=\mathrm{Id},\quad
\mathrm{Ad}(g)|_{\mathfrak{m}_1}=-\mathrm{Id},\quad
\mathrm{Ad}(g)|_{\mathfrak{m}_2}=R(\pi/3)
$$
where $R(\theta)$ is the  anti-clockwise rotation with
angle $\theta$. As before, it follows that for all $u\in\fm_1$ the decomposition $\mathfrak{m}=\mathfrak{m}_0+ \mathfrak{m}_1+\mathfrak{m}_2$ is  $g_u^F$-orthogonal.

Take an $F$-unit vector $u\in\mathfrak{m}_1$ such that its bi-invariant length
reaches the maximum among all $F$-unit vectors in $\mathfrak{m}_1$. Apply a suitable $\mathrm{Ad}(\exp\mathfrak{m}_0)$-action, we can make $u$ a vector
in $\mathfrak{g}_{\pm\gamma_2}$. Let $v$ be any nonzero vector in $\mathfrak{g}_{\pm\gamma_4}$.

Direct calculation shows
\begin{equation*}
[u,\mathfrak{m}]_\mathfrak{m}\subset\mathfrak{m}_0+
[u,\mathfrak{t}]+\mathfrak{g}_{\pm\gamma_5}\subset\mathfrak{m}_0+\mathfrak{m}_1,\quad
[v,\mathfrak{m}]_\mathfrak{m}\subset\mathfrak{m}_0+
\mathfrak{m}_2.
\end{equation*}
Our assumption on the choice of $u$ implies
$\langle u,\mathfrak{g}_{\pm\gamma_5}+[u,\mathfrak{t}]\rangle_u^F=0.$
 Thus the conditions in \eqref{zero} are satisfied and we obtain a contradiction.

\bigskip

\section{Reversible $(\alpha,\beta)$ Finsler metrics }

We now exclude some of the exceptions in the case when the reversible homogeneous metric $F$ is  a non-Riemannian $(\alpha,\beta)$ Finsler metric on $G/H$. Recall that according to \cite{DX},
 such a Finsler metric is determined by an $\mathrm{Ad}(H)$-invariant inner product $\ml \cdot,\cdot\ \mr$ with norm $|\cdot|$, a nonzero  vector $v_0\in\fm$ fixed by $\mathrm{Ad}(H)$,  and a positive even function $\phi$ such that  $F(v)=|v| \phi(\ml v_0,v/|v|\mr)$.

Let $\fm=V_1\oplus V_2$ where $V_2$ is the span of $v_0$, and $V_1$ its orthogonal complement with respect to $\ml \cdot,\cdot\mr$. This decomposition is preserved by the action of $\mathrm{Ad}(H)$. Further more, for any $k\in \O(V_1)$, we have $F(kv)=F(v)$. Thus the restriction of $F$ to $V_1$
is Riemannian, and for all $u\in V_1$ the inner product
 $\langle \cdot,\cdot\rangle_u^F=(D^2F)_u(\cdot,\cdot)$ is independent of $u$. We denote this inner product by $\langle \cdot,\cdot\rangle_0$. It is $\mathrm{Ad}(H)$-invariant, so it defines a $G$-invariant Riemannian metric
 on $G/H$. Comparing the homogeneous flag curvature formula in \pref{flagcurv} for $F$ with the one for $\langle\cdot,\cdot\rangle_0$,  we obtain the following immediate consequence:

\begin{prop} \label{flagabcurv}
Let $(G/H,F)$ be a non-Riemannian homogeneous $(\alpha,\beta)$ reversible Finsler manifold. Then for any linearly independent vectors
 $u,v\in V_1$ with $[u,v]=0$ and $\langle[u,\fm],u\rangle_u^F=0$, the Flag curvature for $u$ and $u\wedge v$ is given by:
\begin{equation*}
K^F(o,u,u\wedge v)=K^0(o,u\wedge v)
\end{equation*}
where $K^0$ is the sectional curvature of the Riemannian metric $\langle \cdot,\cdot\rangle_0$.
\end{prop}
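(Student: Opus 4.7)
The plan is to proceed by direct substitution into the homogeneous flag curvature formula of \pref{flagcurv}. The one crucial input is the observation recorded in the paragraph preceding the proposition: for every nonzero $u\in V_1$, the fundamental tensor $\langle\cdot,\cdot\rangle_u^F$ coincides with the fixed Riemannian inner product $\langle\cdot,\cdot\rangle_0$ on all of $\fm$. The reason, which I would record explicitly, is that $\phi$ is even, hence $\phi'(0)=0$, and $\beta(u)=\langle v_0,u\rangle=0$ for $u\in V_1$; consequently the Hessian of $F$ at such a $u$ loses its dependence on $\beta$ and reduces to the pure quadratic form built from $\alpha$ and the constant $\phi(0)$. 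This is precisely the Riemannian metric $\langle\cdot,\cdot\rangle_0$.

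Given this identification, I would then check that every ingredient entering the flag curvature formula is insensitive to whether we use $F$ or $\langle\cdot,\cdot\rangle_0$. The hypothesis $\langle[u,\fm]_\fm,u\rangle_u^F=0$ translates verbatim to $\langle[u,\fm]_\fm,u\rangle_0=0$, which is exactly the condition needed to apply \pref{flagcurv} to the Riemannian metric $\langle\cdot,\cdot\rangle_0$ as well. The auxiliary operator $U^F(u,v)\in\fm$ is characterized by
\begin{equation*}
\langle U^F(u,v),w\rangle_u^F=\tfrac{1}{2}\bigl(\langle[w,u]_\fm,v\rangle_u^F+\langle[w,v]_\fm,u\rangle_u^F\bigr)\quad\text{for all }w\in\fm,
\end{equation*}
and depends on the metric only through this inner product. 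Replacing $\langle\cdot,\cdot\rangle_u^F$ by $\langle\cdot,\cdot\rangle_0$ on both sides gives $U^F(u,v)=U^0(u,v)$, where $U^0$ is the analogous operator for $\langle\cdot,\cdot\rangle_0$.

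Finally, I would substitute these equalities into the quotient in \pref{flagcurv}: both the numerator $\langle U^F(u,v),U^F(u,v)\rangle_u^F$ and the denominator $\langle u,u\rangle_u^F\langle v,v\rangle_u^F-(\langle u,v\rangle_u^F)^2$ transform into the corresponding expressions built from $\langle\cdot,\cdot\rangle_0$. Since the Riemannian version of the same formula in \pref{flagcurv} computes precisely the sectional curvature $K^0(o,u\wedge v)$, the identity $K^F(o,u,u\wedge v)=K^0(o,u\wedge v)$ follows at once.

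There is essentially no obstacle here; the entire content is the coincidence $\langle\cdot,\cdot\rangle_u^F=\langle\cdot,\cdot\rangle_0$ on $\fm$ for $u\in V_1$, which has already been arranged by the even-$\phi$ hypothesis in the setup. The proposition is really a repackaging that makes this coincidence directly usable in the proof of Theorem B, where one produces zero curvature planes via the same $u,v$ exhibited in the examples of Section 3 once one can show the associated Riemannian metric $\langle\cdot,\cdot\rangle_0$ itself fails to be positively curved on the relevant plane.
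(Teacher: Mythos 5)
Your argument is essentially the paper's: the proposition is obtained by feeding the identification $\langle\cdot,\cdot\rangle_u^F=\langle\cdot,\cdot\rangle_0$, valid for all $u\in V_1$, into \pref{flagcurv}, applied once to $F$ and once to the Riemannian metric $\langle\cdot,\cdot\rangle_0$; the transfer of the hypotheses and of $U(u,v)$ is exactly as you describe, and the paper likewise treats the statement as an immediate comparison of the two instances of the formula. One inaccuracy in your justification of the key identification: at $u\in V_1$ the fundamental tensor does \emph{not} reduce to the pure $\alpha$-term $\phi(0)^2\langle\cdot,\cdot\rangle$. A direct computation with $F=\alpha\,\phi(\beta/\alpha)$, $\phi$ even and $\beta(u)=0$ gives $\langle\cdot,\cdot\rangle_u^F=\phi(0)^2\langle\cdot,\cdot\rangle+\phi(0)\phi''(0)\,\langle v_0,\cdot\rangle\langle v_0,\cdot\rangle$, so a $\beta\otimes\beta$ contribution supported on $V_2=\mathbb{R}v_0$ survives whenever $\phi''(0)\neq 0$; the Hessian does not simply ``lose its dependence on $\beta$.'' This slip is harmless for the proof: the tensor just displayed is still independent of $u\in V_1$ and $\mathrm{Ad}(H)$-invariant, which is all that the definition of $\langle\cdot,\cdot\rangle_0$ as the common value of these Hessians requires (the paper deduces this $u$-independence from the $\O(V_1)$-invariance of $F$ rather than by computation), and with that corrected description the rest of your comparison argument goes through verbatim.
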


By Theorem A and its Corollary, we can exclude all homogeneous spaces, except for the finite list of examples in Theorem A.
So the discussion for the following 3 examples ends the proof of Theorem B.

 \subsection{Example 1}

$\SU(4)/\diag(zA,z,\bar z^3)$ with $A\in\SU(2)$ and $z\in\C$. We have the following  $\Ad_H$ invariant decomposition: $$\mathfrak{m}_0=\mathfrak{t}\cap\mathfrak{m}+
\mathfrak{g}_{\pm(e_3-e_4)},\ \mathfrak{m}_1=\mathfrak{g}_{\pm(e_1-e_3)}+\mathfrak{g}_{\pm(e_2-e_3)},\ \mathfrak{m}_2=\mathfrak{g}_{\pm(e_1-e_4)}+\mathfrak{g}_{\pm(e_2-e_4)}$$
Notice that $\Ad_H$ acts by inequivalent irreducible representations on each of $\fm_1$ and $\fm_2$ and both are inequivalent to the representation on $\fm_0$. So $\fm_1+\fm_2\subset V_1$, restricted to which $F$ coincides with $\langle\cdot,\cdot\rangle_0$. Further more, $\langle\fm_1,\fm_2\rangle_0=0$, and the restrictions of $F$ to $\fm_1$ and $\fm_2$ are multiples of the biinvariant metric. So we have $\langle[u,\fm],u\rangle_u^F=0$ for any nonzero vector $u\in \mathfrak{g}_{\pm(e_1-e_3)}$. Choose the nonzero vector $v\in \mathfrak{g}_{\pm(e_2-e_4)}$. Then for the homogeneous Riemannian metric $\langle \cdot,\cdot\rangle_0$, we see $u$ and $v$ are 2 commuting eigenvectors and hence this plane has 0 curvature for $\langle \cdot,\cdot\rangle_0$, and by Proposition \ref{flagabcurv}, also for $F$.

\bigskip

\subsection{Example 2}

$G_2/\SU(2)$ with $\SU(2)$ the normal subgroup of $\SO(4)$ corresponding to the long root. For the $\Ad(H)$-actions, $\fm$ splits into the sum of a trivial 3 dimensional representation $\fm_0$ and two equivalent irreducible 4 dimensional representation $\fm_1\oplus\fm_2$. Notice $\fm_1+\fm_2\subset V_1$, restricted to which $F$ coincides with $\langle\cdot,\cdot\rangle_0$. For any nonzero eigenvector $u\in\fm_1+\fm_2$ for $\langle\cdot,\cdot\rangle_0$, we have $\langle [u,\fm]_\fm,u\rangle_u^F=0$.
Notice $\langle [u,\fm_i]_\fm,u\rangle_u^F=0$ is obvious for $i=1,2$, but it
requires $u$ to be an eigenvector for $i=0$. Thus by \pref{flagabcurv} we have a contradiction if we can find a 0 curvature plane for $\langle\cdot,\cdot\rangle_0$ containing $u$. The argument from Section 5.9 of \cite{WZ} or from \cite{BB}  provides the required vector $v$.

\subsection{Example 3}  $\Sp(2)/\diag(z,z^3)$. In 5.14 of \cite{WZ}, it was shown that we can find $$g=\mathrm{diag}(e^{\theta\mathbf{i}}\mathbf{j}\,
e^{-\theta\mathbf{i}},e^{\theta\mathbf{i}}\mathbf{j}\,
e^{-\theta\mathbf{i}})\in\mathrm{Sp}(2)$$ for a suitable $\theta\in\mathbb{R}$ such that the $\mathrm{Ad}(g)$-action on $\mathfrak{m}$
preserves the the $\mathrm{Ad}(H)$-invariant inner product
$\langle\cdot,\cdot\rangle$ on $\mathfrak{m}$. Obviously
${V}_2=\mathfrak{t}\cap\mathfrak{m}$, on which $\mathrm{Ad}(g)$ acts as $-\mathrm{Id}$. So $\mathrm{Ad}(g)$ preserves the reversible $(\alpha,\beta)$ Minkowski norm $F$ on $\mathfrak{m}$. The homogeneous Finsler metric $F$ naturally induces a homogeneous Finsler metric $\tilde{F}$ on $G/\tilde{H}$ where $\tilde{H}=H\cup gH$ and is also positively curved. On the other hand, $G/\tilde{H}$ is not orientable since
$\mathrm{Ad}(g)$ reverses the orientation on $\mathfrak{m}$. This is a contradiction to Synge Theorem in Finsler geometry for the odd dimensional case.

\bigskip

\providecommand{\bysame}{\leavevmode\hbox
to3em{\hrulefill}\thinspace}


\begin{thebibliography}{99999}

\bibitem[AW]{AW} S.~Aloff and N.~Wallach, {\em An infinite family of
7--manifolds admitting positively curved Riemannian structures},
Bull. Amer. Math. Soc. {\bf 81}\,(1975), 93--97.

\bibitem[BCS]{BCS} D. Bao, S. S. Chern and Z. Shen, {\em An introduction to Riemannian Finsler geometry}, GTM 200, Springer-Verlag New York, Inc., 2000.

\bibitem[BB]{BB} L.~B\'erard Bergery, {\em  Les vari\'et\'es
riemanniennes homog\`enes simplement connexes de dimension impaire
\`a courbure strictement positive}, J.\ Math.\ pure et appl.\ {\bf
55}\,(1976), 47--68.

\bibitem[Be]{Be} M.~Berger, {\em Les varietes riemanniennes homogenes
normales simplement connexes a courbure strictment positive}, Ann.
Scuola Norm. Sup. Pisa {\bf 15}\,(1961), 191-240.

\bibitem[DH]{DH} S.~Deng and Z.~Hou, {\em Invariant Finsler metrics on homogeneous manifolds}, J. Phys. A: Math. Gen., {\bf 37}\,(2004),
    8245-8253.

\bibitem[DX]{DX} S.~Deng and M.~Xu, {\em Left invariant Clifford-Wolf homogeneous  $(\alpha,\beta)$-metrics on compact semisimple Lie groups, Transformation Groups}, {\bf 20}\,(2015), issue 2, 395-416.

\bibitem[Hu]{Huang2013} L.~Huang, {\em On the fundamental equations of homogeneous
Finsler spaces}, Diff. Geom. Appl., {\bf 40}\,(2015), 187-208.

\bibitem[Ko]{Ko} L.~Kozma, {\em Weinstein's theorem for Finsler manifolds},
Kyoto J. Math. {\bf 46}\,(2006), 377-382.

\bibitem[Wa]{Wa} N.~Wallach, {\em Compact homogeneous Riemannian
manifolds with strictly positive curvature}, Ann. of Math. {\bf 96}\,(1972), 277-295.

\bibitem[WZ]{WZ} B. Wilking and W. Ziller,
 {\em Revisiting homogeneous spaces with positive curvature}, preprint, arXiv:1503.06256.

\bibitem[Wo]{Wo} J.~Wolf, {\em Spaces of Constant Curvature},
 Fifth edition, Publish or Perish Inc., (1984),  MR0217740, Zbl
 0162.53304.


	
\bibitem[XD1]{XD2014} M.~Xu and S.~Deng,{\em Normal homogeneous Finsler manifolds}, preprint, arxiv:1411.3035.
\bibitem[XD2]{Finslerodd}
M. Xu, S. Deng,
{\em Towards the classication of odd dimensional positively
curved reversible homogeneous Finsler manifolds}, arXiv:1504.03018.
\bibitem[XD3]{XD3} M.~Xu and S.~Deng, {\em Homogeneous Finsler manifolds
and the flag-wise positively curved condition}, preprint,
arXiv:1604.07695.

\bibitem[XDHH]{Finslereven}
M. Xu, S. Deng, L. Huang and Z. Hu,
{\em Even dimensional homogeneous Finsler manifolds with positive
flag curvature}, Indiana University Mathematics Journal (to appear),	arXiv:1407.3582.
\bibitem[XW]{WX} M.~Xu and J.~Wolf, {\em $\Sp(2)/\U(1)$ and a positive curvature problem}, Differential Geometry and its Applications,
    {\bf 42}\,(2015), 115-124.

 \bibitem[Zi]{Zi} W.Ziller,
 \emph{Examples of Riemannian manifolds with nonnegative sectional curvature},
  in: Metric and Comparison Geometry,  Surv.  Diff. Geom. {\bf 11},
  ed. K.Grove and J.Cheeger, (2007), 63--102.

\end{thebibliography}
\end{document}